\theoremstyle{plain}
\newtheorem{theorem}{Theorem}[section]
\newtheorem{corollary}[theorem]{Corollary}
\newtheorem{lemma}[theorem]{Lemma}
\theoremstyle{definition}
\newtheorem{definition}[theorem]{Definition}
\newtheorem{example}[theorem]{Example}
\newtheorem{question}{Question}[section]
\theoremstyle{remark}
\newtheorem{remark}[theorem]{Remark}
\DeclareSymbolFont{pxfontssymbolsC}{U}{pxsyc}{m}{n}
\DeclareMathSymbol{\coloneqq}{\mathrel}{pxfontssymbolsC}{66}
\begin{document}



\title{On fractional calculus with general analytic kernels}

\date{}

\author[1,2]{Arran Fernandez\thanks{Corresponding author. Email: \texttt{arran.fernandez@emu.edu.tr}}}
\author[2]{Mehmet Ali \"Ozarslan\thanks{Email: \texttt{mehmetali.ozarslan@emu.edu.tr}}}
\author[3,4]{Dumitru Baleanu\thanks{Email: \texttt{dumitru@cankaya.edu.tr}}}

\affil[1]{{\small Department of Applied Mathematics and Theoretical Physics, University of Cambridge, Wilberforce Road, CB3 0WA, United Kingdom}}
\affil[2]{{\small Department of Mathematics, Faculty of Arts and Sciences, Eastern Mediterranean University, Famagusta, Northern Cyprus, via Mersin-10, Turkey}}
\affil[3]{{\small Department of Mathematics, Cankaya University, 06530 Balgat, Ankara, Turkey}}
\affil[4]{{\small Institute of Space Sciences, Magurele-Bucharest, Romania}}

\maketitle

\begin{abstract}
Many possible definitions have been proposed for fractional derivatives and integrals, starting from the classical Riemann--Liouville formula and its generalisations and modifying it by replacing the power function kernel with other kernel functions. We demonstrate, under some assumptions, how all of these modifications can be considered as special cases of a single, unifying, model of fractional calculus. We provide a fundamental connection with classical fractional calculus by writing these general fractional operators in terms of the original Riemann--Liouville fractional integral operator. We also consider inversion properties of the new operators, prove analogues of the Leibniz and chain rules in this model of fractional calculus, and solve some fractional differential equations using the new operators.
\end{abstract}



\section{Background} \label{sec:intro}

In fractional calculus, we seek to extend the basic calculus operators of differentiation and integration, generalising the order of these operators beyond the integers to the real line or the complex plane. The question of how to define, for example, the $\frac{1}{2}$th derivative of a function is one that has intrigued mathematicians and scientists for hundreds of years \cite{miller-ross,oldham-spanier}. Even today there is no single unique answer to this fundamental question, but many different definitions of fractional calculus have been proposed, starting from various viewpoints, and each one has its own advantages and disadvantages \cite{samko-kilbas-marichev,hristov2}.

One of the most natural and popular models of fractional calculus is the \textbf{Riemann--Liouville} one \cite{miller-ross,oldham-spanier}. Here, the $\alpha$th fractional integral of a function $f$, with constant of integration $a$, is defined by
\begin{equation}
\label{RLdef:int}
\prescript{RL}{}I_{a+}^{\alpha}f(t)\coloneqq\frac{1}{\Gamma(\alpha)}\int_a^t(t-\tau)^{\alpha-1}f(\tau)\,\mathrm{d}\tau,\quad\mathrm{Re}(\alpha)>0,
\end{equation}
while the $\alpha$th fractional derivative of $f$, again depending on a constant $a$, is defined by
\begin{equation}
\label{RLdef:deriv}
\prescript{RL}{}D_{a+}^{\alpha}f(t)\coloneqq\frac{\mathrm{d}^m}{\mathrm{d}t^m}\Big(\prescript{RL}{}I_{a+}^{m-\alpha}f(t)\Big),\quad\mathrm{Re}(\alpha)\geq0,m\coloneqq\lfloor\mathrm{Re}(\alpha)\rfloor+1.
\end{equation}
The term ``differintegration'' is used to cover both integration and differentiation, which are now distinguished only by the sign of the real part of the order. The Riemann--Liouville model has discovered applications in many areas of science -- see for example \cite{bagley,hilfer,kilbas-srivastava-trujillo,magin,petras,podlubny} and the references therein.

The definition \eqref{RLdef:deriv} of Riemann--Liouville fractional derivatives can be modified by interchanging the operations of differentiation and fractional integration. This gives rise to the \textbf{Caputo} model \cite{caputo}. Here, fractional integrals are taken in the Riemann--Liouville sense \eqref{RLdef:int}, while the $\alpha$th fractional derivative of a function $f$, with constant of differintegration $a$, is defined by
\begin{equation}
\label{CAPdef:deriv}
\prescript{C}{}D_{a+}^{\alpha}f(t)\coloneqq\prescript{RL}{}I_{a+}^{m-\alpha}\Big(\frac{\mathrm{d}^m}{\mathrm{d}t^m}f(x)\Big),\quad\mathrm{Re}(\alpha)\geq0,m\coloneqq\lfloor\mathrm{Re}(\alpha)\rfloor+1.
\end{equation}
The Caputo definition is often preferred for modelling initial value problems, although analyticity in the order of differintegration is lost \cite{podlubny,samko-kilbas-marichev}.

One of the most simple and efficient models of fractional calculus is due to Liouville \cite{liouville}. It was originally obtained for a particular set of functions, but it can encapsulate both Riemann--Liouyille and Caputo derivatives as special cases.

In recent years, researchers have proposed new fractional models based on replacing the power function kernel in \eqref{RLdef:int} by a different, singular or non-singular, kernel function. The motivation behind such proposals relates to the various real data corresponding to different complex systems requiring different kernels.


For example, the \textbf{Atangana--Baleanu} (or AB) fractional model \cite{atangana-baleanu}, proposed in 2016, is based on replacing the power function kernel of \eqref{RLdef:int} by a non-singular function known to have strong connections with fractional calculus \cite{haubold-mathai-saxena,mathai-haubold}, namely the Mittag-Leffler function. In this model, there are two ways of defining the $\alpha$th fractional derivative of a function $f$ with constant of differintegration $a$; these are referred to as the AB derivatives of Riemann--Liouville and Caputo type respectively, by comparison with \eqref{RLdef:deriv} and \eqref{CAPdef:deriv}:
\begin{alignat}{2}
\label{ABRdef:deriv}
\prescript{ABR}{}D_{a+}^{\alpha}f(t)&\coloneqq\frac{B(\alpha)}{1-\alpha}\frac{\mathrm{d}}{\mathrm{d}t}\int_a^tE_{\alpha}\left(\frac{-\alpha}{1-\alpha}(t-\tau)^{\alpha}\right)f(\tau)\,\mathrm{d}\tau,&&\quad0<\alpha<1, \\
\label{ABCdef:deriv}
\prescript{ABC}{}D_{a+}^{\alpha}f(t)&\coloneqq\frac{B(\alpha)}{1-\alpha}\int_a^tE_{\alpha}\left(\frac{-\alpha}{1-\alpha}(t-\tau)^{\alpha}\right)f'(\tau)\,\mathrm{d}\tau,&&\quad0<\alpha<1,
\end{alignat}
where the function $B$ satisfies $B(0)=B(1)=1$ and is often \cite{baleanu-fernandez} taken to be real and positive.

Another recently proposed model of fractional calculus, called the \textbf{generalised proportional fractional} (or GPF) model, is based on the following fractional integral operator with two parameters:
\begin{equation}
\label{GPFdef:int}
\prescript{GPF}{}I_{a+}^{\alpha,\rho}f(t)\coloneqq\frac{1}{\rho^{\alpha}\Gamma(\alpha)}\int_a^t\exp\left(\frac{\rho-1}{\rho}(t-\tau)\right)(t-\tau)^{\alpha-1}f(\tau)\,\mathrm{d}\tau,\quad0<\rho\leq1,\mathrm{Re}(\alpha)>0,
\end{equation}
This operator and its derivative were analysed in detail in \cite{jarad-abdeljawad-alzabut}.

The above formulae can be viewed as special cases of the \textbf{Prabhakar} fractional model, which was introduced in 1971 \cite{prabhakar} for solving an integral equation, and later \cite{kilbas-saigo-saxena} interpreted as a fractional differintegral. Here, the fractional integral of a function $f$ with constant of differintegration $c$, with parameters $\alpha,\beta,\omega,\rho$ determining the order, is defined by
\begin{equation}
\label{PRABdef:int}
\prescript{P}{\beta,\omega}I_{a+}^{\alpha,\rho}f(t)\coloneqq\int_a^t(t-\tau)^{\alpha-1}E_{\beta,\alpha}^{\rho}\left[\omega(t-\tau)^{\beta}\right]f(\tau)\,\mathrm{d}\tau,\quad\mathrm{Re}(\alpha)>0,\mathrm{Re}(\beta)>0,
\end{equation}
while the fractional derivative of $f$ with the same parameters is defined similarly to \eqref{RLdef:deriv}, namely by
\begin{equation}
\label{PRABdef:deriv}
\prescript{P}{\beta,\omega}D_{a+}^{\alpha,\rho}f(t)\coloneqq\frac{\mathrm{d}^m}{\mathrm{d}t^m}\left(\prescript{P}{\beta,\omega}I_{a+}^{m-\alpha,-\rho}f(t)\right),\quad\mathrm{Re}(\alpha)>0,\mathrm{Re}(\beta)>0,m\coloneqq\lfloor\mathrm{Re}(\alpha)\rfloor+1.
\end{equation}
The function $E_{\beta,\alpha}^{\rho}$ appearing in \eqref{PRABdef:int} is a generalisation of the Mittag-Leffler function defined by
\begin{equation}
\label{GMLdef}
E_{\beta,\alpha}^{\rho}(x)\coloneqq\sum_{n=0}^{\infty}\frac{\Gamma(\rho+n)}{\Gamma(\rho)\Gamma(\beta n+\alpha)n!}x^n.
\end{equation}
Although it is older than some of the other models mentioned above, the Prabhakar formula has gone mostly unnoticed until recent years. But now it has begun to attract attention, and applications have been discovered e.g. in viscoelasticity \cite{giusti-colombaro} and stochastic processes \cite{gorenflo-kilbas-mainardi-rogosin}.

The above covers only a few of the many definitions which have been proposed for fractional derivatives and integrals. There are other definitions dating back a hundred years or more \cite{samko-kilbas-marichev}, and more recently various generalisations of some of the above models have been proposed \cite{fernandez-baleanu,garra-gorenflo-polito-tomovski,ozarslan-ustaoglu1,ozarslan-ustaoglu2,srivastava-tomovski}.

The wealth of different definitions available for fractional derivatives and integrals has caused some researchers to propose criteria to determine whether a particular operator should be called a fractional derivative or not. But despite several attempts \cite{ross,ortigueira-machado,tarasov,caputo-fabrizio2}, so far there is no universally accepted set of criteria for this.

\begin{question}
Is it possible to define a \textit{general} class of fractional-calculus operators, which contains the already-existing operators as particular cases?

This is a natural question to ask from a mathematical point of view: generalisation is always a key concept in mathematics. Armed with a general formalism which includes all the specific fractional-calculus operators, we can then attempt to prove results and establish a theory just for this general model, rather than proving similar results many times in many different models.

A similar question has also been proposed by researchers working in applied sciences, e.g. engineers, from the point of view of real-world applications. They wish, in principle, to have a simple and efficient structure for fractional calculus -- if possible, just one single model, as in the classical case -- which can be used to model many different real-life processes. In our opinion, this fundamental question is still an open problem: currently, experimental data in several different contexts corresponds to several separate models of fractional calculus, but if these can be unified in a single framework, then that generalised framework is all we need. In any case, real data should be taken as top criteria in validating a given fractional model. In this way, \textit{a priori} an equal chance is given to all fractional models but, finally, only one model is chosen as being the most efficient for a specific set of real data.  We are sure that by improving the numerical schemes it will be possible to see sharp differences between the diverse proposed fractional models and criteria for what a fractional operator means. We believe that research in the field of fractional calculus is interdisciplinary and soon we will have improved fractional models in many fields of science and engineering.

One direction of research in this area has been to add more indices and parameters, as we see for example with the multi-parameter Prabhakar model mentioned above. However, although these complex operators are very nice mathematically, the laws of nature are always simple. Therefore, in order to satisfy the concerns of both pure mathematics and applications, we have to find a compromise between the complex forms of generalised fractional operators and the simplicity of the laws of nature.
\end{question}

In the current work, we consider a general framework of operators, which includes many of the proposed models of fractional calculus mentioned above, and whose relevance to fractional calculus can be justified in a clear and objective manner. The extreme generality of our approach enables us to consider many types of fractional operators as special cases, but we are still able to adapt some of the standard tools of fractional calculus to our general operators. Previous researchers such as \cite{atanackovic-pilipovic-zorica,kochubei} have noticed that such a general framework exists, but they did not analyse it in enough detail to grasp its true power. Here, inspired by some existing results on the AB \cite{baleanu-fernandez} and Prabhakar \cite{giusti,fernandez-baleanu-srivastava} models, we demonstrate how our general model can be expressed solely in terms of the classical Riemann--Liouville operators \eqref{RLdef:int} by means of a series formula. This confirms that our definition forms part of the field of fractional calculus, as well as enabling us to prove several theorems about it which are analogous to basic theorems in classical calculus.

The organisation of this paper is as follows. In Section 2.1 we define a new general class of fractional integral operators and establish many of their fundamental properties. In Section 2.2 we consider how to define fractional derivatives in this general model. In Section \ref{sec:transODE} we prove some results on transforms and consider differential equations in the new model. In section \ref{sec:prodchain} we establish Leibniz and chain rules in the new model. In section \ref{sec:CauchyVolterra} we solve a general Cauchy problem in the new model. In section \ref{sec:conclusions} we conclude the paper.

\section{Definition and basic properties} \label{sec:main}

\subsection{Fractional integrals}

We propose the following definition for a new general class of fractional operators.

\begin{definition}
\label{E:defn}
Let $[a,b]$ be a real interval, $\alpha$ and $\beta$ be complex parameters with non-negative real parts, and $R$ be a positive number satisfying $R>(b-a)^{\mathrm{Re}(\beta)}$. Let $A$ be a complex function analytic on the disc $D(0,R)$ and defined on this disc by the locally uniformly convergent power series
\begin{equation}
\label{A:entire}
A(x)=\sum_{n=0}^{\infty}a_nx^n,
\end{equation}
where the coefficients $a_n=a_n(\alpha,\beta)$ are permitted to depend on $\alpha$ and $\beta$ if desired. We define the following fractional integral operator, acting on a function $f:[a,b]\rightarrow\mathbb{R}$ with properties to be determined later (for example, $f\in L^1[a,b]$ -- see Theorem \ref{L1:thm} below):
\begin{align}
\label{EIdef} \prescript{A}{}I_{a+}^{\alpha,\beta}f(t)\coloneqq \int_a^t(t-\tau)^{\alpha-1}A\left((t-\tau)^{\beta}\right)f(\tau)\,\mathrm{d}\tau.
\end{align}
\end{definition}

The formula \eqref{EIdef} is an extreme generalisation of the assortment of fractional models we considered above. In order for this new definition to be useful, we shall need to impose at least some further conditions on the function $A$. But before proceeding to such considerations, let us state formally how the existing fractional models can be seen as cases of this new generalism.

\begin{remark}
\label{E:cases}
It has been demonstrated \cite{giusti,fernandez-baleanu-srivastava} that the Prabhakar kernel is general enough to include several other kernel functions of fractional calculus, including the AB one, as special cases. We now demonstrate that all of the fractional models mentioned above can be viewed as special cases of our new generalised model. For appropriate functions $f$ and parameters $a$, $\alpha$, $\beta$, etc., we have the following correspondences.

The classical integer-order iterated integral is a special case given by:
\begin{equation}
\label{Eclassic:eqn} I^n_{a+}f(t)=\frac{1}{(n-1)!A(1)}\prescript{A}{}I_{a+}^{n,0}f(t),
\end{equation}
for an arbitrary choice of function $A$.

Similarly, the RL integral \eqref{RLdef:int} is a special case given by:
\begin{equation}
\label{ERL:eqn1} \prescript{RL}{}I^{\alpha}_{a+}f(t)=\frac{1}{\Gamma(\alpha)A(1)}\prescript{A}{}I_{a+}^{\alpha,0}f(t),
\end{equation}
for an arbitrary choice of function $A$. However, it is often more useful to choose a specific function $A$, and the following choice seems most natural:
\begin{align}
\label{ERL:fns} A(x)&=\frac{1}{\Gamma(\alpha)}, \\
\label{ERL:eqn} \prescript{RL}{}I^{\alpha}_{a+}f(t)&=\prescript{A}{}I_{a+}^{\alpha,0}f(t).
\end{align}
An alternative expression for the RL integral is as follows:
\begin{align}
\label{ERL2:fns} A(x)&=\frac{1}{\Gamma(\alpha)}x, \\
\label{ERL2:eqn} \prescript{RL}{}I^{\alpha}_{a+}f(t)&=\prescript{A}{}I_{a+}^{1,\alpha-1}f(t).
\end{align}
But this seems less natural than the previous correspondence.


The ABR and ABC derivatives \eqref{ABRdef:deriv}--\eqref{ABCdef:deriv} are special cases given by:
\begin{align}
\label{EAB:fns} A(x)&=\frac{B(\alpha)}{1-\alpha}E_{\alpha}\left(\frac{-\alpha}{1-\alpha}x\right), \\
\label{EABR:eqn} \prescript{ABR}{}D_{a+}^{\alpha}f(t)&=\frac{\mathrm{d}}{\mathrm{d}t}\prescript{A}{}I_{a+}^{1,\alpha}f(t), \\
\label{EABC:eqn} \prescript{ABC}{}D_{a+}^{\alpha}f(t)&=\prescript{A}{}I_{a+}^{1,\alpha}f'(t).
\end{align}

The GPF integral \eqref{GPFdef:int} is a special case given by:
\begin{align}
\label{EGPF:fns} A(x)&=\frac{1}{\rho^{\alpha}\Gamma(\alpha)}\exp\left(\frac{\rho-1}{\rho}x\right), \\
\label{EGPF:eqn} \prescript{GPF}{}I_{a+}^{\alpha,\rho}f(t)&=\prescript{A}{}I_{a+}^{\alpha,1}f(t).
\end{align}

The Prabhakar integral \eqref{PRABdef:int} is a special case given by:
\begin{align}
\label{EPRAB:fns} A(x)&=E^{\rho}_{\beta,\alpha}(\omega x), \\
\label{EPRAB:eqn} \prescript{P}{\beta,\omega}I_{a+}^{\alpha,\rho}f(t)&=\prescript{A}{}I_{a+}^{\alpha,\beta}f(t).
\end{align}

We note that the multiplier functions which are used outside of the integral in the AB and GPF definitions can now be absorbed into the coefficients of the function $A$. This gives a cleaner expression for the fractional integral formula: in the definition \eqref{EIdef}, there is no need for any multiplier function outside of the integral.

It is also important to note that our formulation is not general enough to cover \textit{all} models of fractional calculus which have ever been proposed. The field of fractional calculus is very broad and covers a wide variety of different types of operators.
\end{remark}

\begin{remark}
There are some existing papers in the literature which propose definitions of fractional operators involving very general kernel functions -- see for example Kochubei \cite{kochubei}, Agrawal \cite{agrawal}, and Zhao and Luo \cite{zhao-luo}. Let us now compare these with our Definition \ref{E:defn}.

In each of the previous definitions, a very general kernel function was used, yielding operators of the following essential form:
\begin{equation}
\label{previous}
\int_a^tk(t-\tau,\alpha)f(\tau)\,\mathrm{d}\tau,
\end{equation}
where $a$ is a constant of integration, $\alpha$ is a parameter, and $k$ is a general kernel function satisfying some very mild conditions. The authors of the previous papers studied such operators in various ways: formulating and solving differential equations \cite{kochubei}, analysing variational problems \cite{agrawal}, and modelling heat conduction processes \cite{zhao-luo}. However, they were unable to prove in these models certain other properties which might be expected of fractional differintegrals, such as a Leibniz rule or composition properties of the operators.

This is because their formulation is more general than ours: they expanded the scope of the definition so far that it no longer has a clear connection to fractional calculus. The operation defined by \eqref{previous} is essentially a convolution with the general function $k$. For certain choices of $k$, such a convolution operation does reduce to the well-known fractional differentiation and integration. But in general it is hard to describe the ``fractionality'' of the operators, or to see why the parameter $\alpha$ would represent the order of differentiation or integration.

In our work, we have chosen the level of generality to be slightly lower in the definitions. Instead of taking a completely general kernel function $k$, we choose our kernel function to be a general analytic function of a fractional power. The explicit involvement of fractional power functions allows us to easily define an order for our fractional differintegrals. The assumption of analyticity (as we shall see below) enables us to prove identities which connect our operators firmly back to the classical fractional calculus, which in turn enables us to prove many important results analogous to those in the classical models. Although the previously defined operators such as \eqref{previous} are known to be useful in their own right, we believe that our slightly less general operators are more clearly part of fractional calculus.
\end{remark}

\begin{definition}
\label{AGamma:defn}
For any analytic function $A$ as in Definition \ref{E:defn}, we define $A_{\Gamma}$ to be the transformed function
\begin{equation}
\label{AGamma:eqn}
A_{\Gamma}(x)=\sum_{n=0}^{\infty}a_n\Gamma(\beta n+\alpha)x^n.
\end{equation}
The relationship between the pair of functions $A$ and $A_{\Gamma}$ is vital to the understanding of our generalised operators. Although the operator $\prescript{A}{}I_{a+}^{\alpha,\beta}$ itself is most easily defined using the function $A$, many of the results concerning it are more elegant when written in terms of $A_{\Gamma}$ instead of $A$: for example, see Theorem \ref{Eseries:thm} and Theorem \ref{Laplace:thm}.

By the ratio test and Stirling's approximation, the series \eqref{AGamma:eqn} for $A_{\Gamma}$ has radius of convergence given by \[\lim_{n\rightarrow\infty}\left|\frac{a_{n}}{a_{n+1}}(\beta n+\beta+\alpha)^{-\beta}\right|.\] Also by the ratio test, the radius of convergence of the series \eqref{A:entire} for $A$ is $\lim_{n\rightarrow\infty}\left|\frac{a_{n}}{a_{n+1}}\right|\geq R$. Thus we see that if the series for $A_{\Gamma}$ converges, then so does the series for $A$, but not vice versa.
\end{definition}

\begin{theorem}
\label{L1:thm}
With all notation as in Definition \ref{E:defn}, we have a well-defined bounded operator \[\prescript{A}{}I_{a+}^{\alpha,\beta}:L^1[a,b]\rightarrow L^1[a,b]\] for any fixed $\alpha$ and $\beta$ with $\mathrm{Re}(\alpha),\mathrm{Re}(\beta)\geq0$.
\end{theorem}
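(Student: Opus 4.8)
The plan is to recognise $\prescript{A}{}I_{a+}^{\alpha,\beta}$ as a truncated (Volterra) convolution operator against a single fixed kernel, and then to obtain the bound from Young's convolution inequality once that kernel has been shown to lie in $L^1$. Concretely, set $K(s)\coloneqq s^{\alpha-1}A(s^{\beta})$ for $s\in(0,b-a]$, so that the definition \eqref{EIdef} reads $\prescript{A}{}I_{a+}^{\alpha,\beta}f(t)=\int_a^tK(t-\tau)f(\tau)\,\mathrm{d}\tau$. The entire statement then reduces to the estimate $\|K*f\|_{L^1[a,b]}\leq\|K\|_{L^1[0,b-a]}\,\|f\|_{L^1[a,b]}$. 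Both assertions—that the operator is well defined (the defining integral converges for almost every $t$) and that it is bounded—will follow simultaneously from a single application of the Tonelli--Fubini theorem, provided the kernel norm $\|K\|_{L^1[0,b-a]}$ is finite.

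The first substantive step is therefore to prove $K\in L^1[0,b-a]$, and this is precisely where the hypotheses on $A$, $\beta$, and $R$ are used. For real $s\in(0,b-a]$ one has $|s^{\beta}|=s^{\mathrm{Re}(\beta)}\leq(b-a)^{\mathrm{Re}(\beta)}$, so the standing assumption $R>(b-a)^{\mathrm{Re}(\beta)}$ guarantees that $s^{\beta}$ stays inside the closed disc $\overline{D(0,(b-a)^{\mathrm{Re}(\beta)})}$, which is compactly contained in the open disc $D(0,R)$ on which $A$ is analytic. Being continuous on this compact disc, $A$ is bounded there, say $|A(x)|\leq M$ for $|x|\leq(b-a)^{\mathrm{Re}(\beta)}$, whence $|A(s^{\beta})|\leq M$ uniformly in $s$. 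Consequently $|K(s)|\leq M\,s^{\mathrm{Re}(\alpha)-1}$, and integrating the resulting power singularity gives $\|K\|_{L^1[0,b-a]}\leq M\,(b-a)^{\mathrm{Re}(\alpha)}/\mathrm{Re}(\alpha)<\infty$.

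With the kernel norm in hand, the final step is routine. Estimating $|\prescript{A}{}I_{a+}^{\alpha,\beta}f(t)|\leq\int_a^t|K(t-\tau)|\,|f(\tau)|\,\mathrm{d}\tau$, integrating in $t$ over $[a,b]$, interchanging the order of integration (legitimate by Tonelli, as the integrand is non-negative and measurable), and substituting $u=t-\tau$ yields $\int_a^b|f(\tau)|\int_0^{b-\tau}|K(u)|\,\mathrm{d}u\,\mathrm{d}\tau\leq\|K\|_{L^1[0,b-a]}\,\|f\|_{L^1[a,b]}$. Finiteness of this double integral justifies Fubini, shows that $\prescript{A}{}I_{a+}^{\alpha,\beta}f(t)$ is defined and finite for almost every $t$, and delivers the operator bound $\|\prescript{A}{}I_{a+}^{\alpha,\beta}f\|_{L^1[a,b]}\leq\|K\|_{L^1[0,b-a]}\,\|f\|_{L^1[a,b]}$ all at once.

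I expect the only delicate point to be the integrability of the kernel at the origin. The factor $A(s^{\beta})$ is harmless exactly because the constraint $R>(b-a)^{\mathrm{Re}(\beta)}$ confines its argument to a compact subset of the domain of analyticity, giving the uniform bound $M$; the genuine constraint comes from the power $s^{\mathrm{Re}(\alpha)-1}$, which is integrable on $[0,b-a]$ when $\mathrm{Re}(\alpha)>0$. The degenerate boundary case $\mathrm{Re}(\alpha)=0$ (with $a_0\neq0$) is the one to watch, since there the leading behaviour $|K(s)|\sim|a_0|\,s^{-1}$ is non-integrable and the $L^1\to L^1$ bound would fail; it should therefore either be excluded or accommodated through the vanishing of the relevant leading coefficient.
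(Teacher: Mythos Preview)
Your proposal is correct and follows essentially the same route as the paper: both bound $|A(s^{\beta})|$ by a constant $M$ using analyticity on the compact disc $\overline{D(0,(b-a)^{\mathrm{Re}(\beta)})}\subset D(0,R)$, reduce to the integrability of $s^{\mathrm{Re}(\alpha)-1}$ on $[0,b-a]$, and apply Tonelli--Fubini to obtain the bound $\|\prescript{A}{}I_{a+}^{\alpha,\beta}f\|_1\leq\|K\|_{L^1[0,b-a]}\|f\|_1$. Your explicit flag on the boundary case $\mathrm{Re}(\alpha)=0$ with $a_0\neq0$ is well placed---the paper's proof glosses over this, and indeed the argument as written only yields the bound for $\mathrm{Re}(\alpha)>0$.
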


\begin{proof}
First we prove that for any function $f\in L^1[a,b]$, the resulting function $\prescript{A}{}I_{a+}^{\alpha,\beta}f$ is also in $L^1[a,b]$. For this it will suffice to show that the definite absolute integral \[\int_a^b\left|\prescript{A}{}I_{a+}^{\alpha,\beta}f(t)\right|\,\mathrm{d}t\leq\int_a^b\int_a^t\left|(t-\tau)^{\alpha-1}A\left((t-\tau)^{\beta}\right)f(\tau)\right|\,\mathrm{d}\tau\,\mathrm{d}t\] is finite. By Fubini's theorem, this is equivalent to showing that \[\int_a^b\int_\tau^b\left|(t-\tau)^{\alpha-1}A\left((t-\tau)^{\beta}\right)f(\tau)\right|\,\mathrm{d}t\,\mathrm{d}\tau\] is finite. The latter double integral can be rearranged to \[\int_a^b|f(\tau)|\int_0^{b-\tau}\left|u^{\alpha-1}A(u^{\beta})\right|\,\mathrm{d}u\,\mathrm{d}\tau\leq\int_a^b|f(\tau)|\,\mathrm{d}\tau\int_0^{b-a}\left|u^{\alpha-1}A(u^{\beta})\right|\,\mathrm{d}u.\] Because $A$ is analytic on $D(0,R)$, the function $A(u^{\beta})$ is bounded on the finite interval $[0,b-a]$. And $f$ is an $L^1$ function, so the double integral is bounded as required.

We have also now shown that \[\left\|\prescript{A}{}I_{a+}^{\alpha,\beta}f\right\|_1\leq\|f\|_1\int_0^{b-a}\left|u^{\alpha-1}A(u^{\beta})\right|\,\mathrm{d}u.\] This proves that $\prescript{A}{}I_{a+}^{\alpha,\beta}$ is a bounded operator on $L^1[a,b]$, with operator norm at most $(b-a)^{\alpha}M$, where \[M\coloneqq\sup_{|x|<(b-a)^{\beta}}|A(x)|.\] Note that we have used the assumptions $\mathrm{Re}(\alpha),\mathrm{Re}(\beta)\geq0$ in order to know that the final integral is well-behaved near $u=0$.
\end{proof}

\begin{theorem}[Series formula]
\label{Eseries:thm}
With all notation as in Definition \ref{E:defn}, for any function $f\in L^1[a,b]$, we have the following locally uniformly convergent series for $\prescript{A}{}I_{a+}^{\alpha,\beta}f$ as a function on $[a,b]$:
\begin{equation}
\label{Eseries:eqn}
\prescript{A}{}I_{a+}^{\alpha,\beta}f(t)=\sum_{n=0}^{\infty}a_n\Gamma(\beta n+\alpha)\prescript{RL}{}I_{a+}^{\alpha+n\beta}f(t).
\end{equation}
Alternatively, this identity can be written more concisely in terms of the transformed function $A_{\Gamma}$ introduced in Definition \ref{AGamma:defn}:
\begin{equation}
\label{Eseries:AGamma}
\prescript{A}{}I_{a+}^{\alpha,\beta}f(t)=A_{\Gamma}\left(\prescript{RL}{}I_{a+}^{\beta}\right)\prescript{RL}{}I_{a+}^{\alpha}f(t).
\end{equation}
\end{theorem}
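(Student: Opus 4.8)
The plan is to insert the power-series expansion \eqref{A:entire} of $A$ into the defining integral \eqref{EIdef} and then interchange summation with integration. Replacing $A\left((t-\tau)^{\beta}\right)$ by $\sum_{n=0}^{\infty}a_n(t-\tau)^{n\beta}$ makes the integrand equal to $\sum_{n=0}^{\infty}a_n(t-\tau)^{\alpha+n\beta-1}f(\tau)$, and once the exchange is justified each term is, up to a gamma factor, a classical Riemann--Liouville integral, since
\[
\int_a^t(t-\tau)^{\alpha+n\beta-1}f(\tau)\,\mathrm{d}\tau=\Gamma(\alpha+n\beta)\,\prescript{RL}{}I_{a+}^{\alpha+n\beta}f(t).
\]
Summing over $n$ then gives \eqref{Eseries:eqn} directly. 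Thus the entire content of the theorem is the legitimacy of the interchange together with the convergence of the resulting series.

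I expect this convergence bookkeeping to be the main obstacle, and the crucial point is that the arguments fed into $A$ never leave a fixed compact disc inside $D(0,R)$. Indeed, for $a\leq\tau\leq t\leq b$ one has $\left|(t-\tau)^{\beta}\right|=(t-\tau)^{\mathrm{Re}(\beta)}\leq(b-a)^{\mathrm{Re}(\beta)}<R$, where the strict inequality is exactly the hypothesis imposed in Definition \ref{E:defn}. I would therefore introduce the majorant series $\widetilde{A}(x)\coloneqq\sum_{n=0}^{\infty}|a_n|x^n$, which shares the radius of convergence of \eqref{A:entire} and so is finite and continuous on $[0,(b-a)^{\mathrm{Re}(\beta)}]$, with supremum $\widetilde{M}$ there. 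Summing absolute values of the integrands and applying Tonelli's theorem yields
\[
\sum_{n=0}^{\infty}\int_a^t\left|a_n(t-\tau)^{\alpha+n\beta-1}f(\tau)\right|\mathrm{d}\tau\leq\widetilde{M}\int_a^t(t-\tau)^{\mathrm{Re}(\alpha)-1}|f(\tau)|\,\mathrm{d}\tau,
\]
and the right-hand side is a Riemann--Liouville integral of the $L^1$ function $|f|$, hence finite for almost every $t\in[a,b]$ by Theorem \ref{L1:thm}. This finiteness licenses Fubini's theorem and hence the interchange; it also hinges entirely on the inequality $(b-a)^{\mathrm{Re}(\beta)}<R$, which is precisely why that assumption appears in Definition \ref{E:defn}.

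For the local uniform convergence claim I would estimate the tail in the same way: the partial-sum remainder is bounded by $\int_a^t(t-\tau)^{\mathrm{Re}(\alpha)-1}|f(\tau)|\,\widetilde{R}_N\!\left((t-\tau)^{\mathrm{Re}(\beta)}\right)\mathrm{d}\tau$, where $\widetilde{R}_N$ denotes the tail of $\widetilde{A}$. Since $\widetilde{R}_N$ is increasing and tends to $0$ uniformly on the compact range $[0,(b-a)^{\mathrm{Re}(\beta)}]$, the factor coming from $\widetilde{A}$ can be pulled out and made arbitrarily small, leaving the $L^1$-controlled weight integral; this delivers the stated local uniform convergence. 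Finally, to obtain the compact form \eqref{Eseries:AGamma}, I would read $A_{\Gamma}\!\left(\prescript{RL}{}I_{a+}^{\beta}\right)$ as the operator power series $\sum_{n=0}^{\infty}a_n\Gamma(\beta n+\alpha)\left(\prescript{RL}{}I_{a+}^{\beta}\right)^{n}$ and invoke the classical semigroup law $\prescript{RL}{}I_{a+}^{\mu}\prescript{RL}{}I_{a+}^{\nu}=\prescript{RL}{}I_{a+}^{\mu+\nu}$, which gives $\left(\prescript{RL}{}I_{a+}^{\beta}\right)^{n}\prescript{RL}{}I_{a+}^{\alpha}=\prescript{RL}{}I_{a+}^{n\beta+\alpha}$; applying this to $f$ reproduces \eqref{Eseries:eqn} term by term and so establishes \eqref{Eseries:AGamma}.
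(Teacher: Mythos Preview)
Your proposal is correct and follows essentially the same route as the paper: substitute the power series for $A$ into \eqref{EIdef}, use the hypothesis $(b-a)^{\mathrm{Re}(\beta)}<R$ to keep the argument of $A$ inside a compact subdisc, interchange sum and integral, and identify each term as a Riemann--Liouville integral. Your treatment is in fact more careful than the paper's --- you introduce the majorant $\widetilde{A}$ and invoke Tonelli/Fubini explicitly, whereas the paper simply appeals to the local uniform convergence of \eqref{A:entire} on $D(0,R)$ to justify the swap --- but the underlying idea is identical.
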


\begin{proof}
We substitute the definition \eqref{A:entire} into the original formula \eqref{EIdef}:
\begin{align*}
\prescript{A}{}I_{a+}^{\alpha,\beta}f(t)&=\int_a^t(t-\tau)^{\alpha-1}\sum_{n=0}^{\infty}\left[a_n(t-\tau)^{\beta n}\right]f(\tau)\,\mathrm{d}\tau \\
&=\int_a^t\sum_{n=0}^{\infty}a_n(t-\tau)^{\beta n+\alpha-1}f(\tau)\,\mathrm{d}\tau.
\end{align*}
The series here is locally uniformly convergent, since $0\leq\left|(t-\tau)^{\beta}\right|\leq(b-a)^{\mathrm{Re}(\beta)}<R$ and the series \eqref{A:entire} for $A$ is assumed to be locally uniformly convergent on $D(0,R)\subset\mathbb{C}$. So the order of integration and summation can be swapped, to get:
\begin{align*}
\prescript{A}{}I_{a+}^{\alpha,\beta}f(t)&=\sum_{n=0}^{\infty}\int_a^ta_n(t-\tau)^{\beta n+\alpha-1}f(\tau)\,\mathrm{d}\tau \\
&=\sum_{n=0}^{\infty}a_n\Gamma(\beta n+\alpha)\left[\frac{1}{\Gamma(\beta n+\alpha)}\int_a^t(t-\tau)^{\beta n+\alpha-1}f(\tau)\,\mathrm{d}\tau\right].
\end{align*}
By the definition \eqref{RLdef:int} of Riemann--Liouville fractional integrals, the expression in square brackets is precisely the $(\beta n+\alpha)$th RL integral of $f(t)$. And the result follows.
\end{proof}

The above two theorems are fundamental for the study of these general operators. Theorem \ref{L1:thm} establishes a domain of definition for the operator, namely the space of all $L^1$ functions on the interval. This is the same domain of definition that works for the RL \cite{samko-kilbas-marichev}, AB \cite{baleanu-fernandez}, and Prabhakar \cite{prabhakar} operators. Theorem \ref{Eseries:thm} establishes a way of expressing the general operators in terms of only the classical Riemann--Liouville fractional integrals, following the method used in \cite{baleanu-fernandez,fernandez-baleanu-srivastava}. This is the reason for our assumption that $A$ was analytic, i.e. has a convergent power series. It is immensely significant because, as well as cementing the position of these operators as part of fractional calculus, it also provides us with short-cuts to many useful theorems concerning them. Many well-known results on RL integrals can now be quickly extended to a much more general scenario.

\begin{theorem}
With all notation as in Definition \ref{E:defn}, we have a well-defined operator \[\prescript{A}{}I_{a+}^{\alpha,\beta}:C[a,b]\rightarrow C(a,b)\] for any fixed $\alpha$ and $\beta$ with $\mathrm{Re}(\alpha),\mathrm{Re}(\beta)\geq0$.
\end{theorem}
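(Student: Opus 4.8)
The goal is to show that for $f \in C[a,b]$, the function $\prescript{A}{}I_{a+}^{\alpha,\beta}f$ is continuous on the open interval $(a,b)$. The plan is to prove continuity directly from the integral definition \eqref{EIdef} by estimating the difference of the operator evaluated at two nearby points. First I would fix $t_0 \in (a,b)$ and consider $t$ near $t_0$, writing
\[
\prescript{A}{}I_{a+}^{\alpha,\beta}f(t) - \prescript{A}{}I_{a+}^{\alpha,\beta}f(t_0) = \int_a^t (t-\tau)^{\alpha-1}A\big((t-\tau)^{\beta}\big)f(\tau)\,\mathrm{d}\tau - \int_a^{t_0}(t_0-\tau)^{\alpha-1}A\big((t_0-\tau)^{\beta}\big)f(\tau)\,\mathrm{d}\tau.
\]
After a change of variables $u = t-\tau$ in each integral (so the kernel becomes $u^{\alpha-1}A(u^\beta)$ and the integrand picks up $f(t-u)$), the two integrals are both taken against the same kernel function $g(u) \coloneqq u^{\alpha-1}A(u^\beta)$, and the difference can be split into a part measuring the change in the upper limit of integration and a part measuring the change in $f(t-u)$ versus $f(t_0-u)$.

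The key facts I would invoke are: (i) from the proof of Theorem \ref{L1:thm}, the kernel $g(u) = u^{\alpha-1}A(u^\beta)$ is integrable near $u=0$ and $A(u^\beta)$ is bounded on $[0,b-a]$, since $A$ is analytic on $D(0,R)$ with $R > (b-a)^{\mathrm{Re}(\beta)}$; and (ii) $f$, being continuous on the compact interval $[a,b]$, is bounded by some $\|f\|_\infty$ and, more importantly, uniformly continuous. Boundedness of $f$ together with integrability of $g$ controls the term arising from the change in the integration limit, which tends to $0$ as $t \to t_0$ by absolute continuity of the integral of $g$. Uniform continuity of $f$ controls the term $\int |g(u)|\,|f(t-u) - f(t_0-u)|\,\mathrm{d}u$, since for any $\varepsilon>0$ we can make $|f(t-u)-f(t_0-u)|$ uniformly small once $|t-t_0|$ is small, and then bound the integral by $\varepsilon \int_0^{b-a}|g(u)|\,\mathrm{d}u$, a finite constant. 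Combining the two estimates gives $\big|\prescript{A}{}I_{a+}^{\alpha,\beta}f(t) - \prescript{A}{}I_{a+}^{\alpha,\beta}f(t_0)\big| \to 0$ as $t \to t_0$, establishing continuity at $t_0$.

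The main obstacle is the singularity of the kernel $g(u) = u^{\alpha-1}A(u^\beta)$ at $u=0$ when $\mathrm{Re}(\alpha) < 1$: one cannot simply bound $g$ uniformly and must instead argue via dominated convergence or via the absolute continuity of the integral $u \mapsto \int_0^u |g|$. Care is also needed in handling the domains of integration when $t$ and $t_0$ differ, since the lower endpoint of the $u$-integral is fixed at $0$ but the upper endpoints $t-a$ and $t_0-a$ differ; it is cleanest to extend both integrands by zero beyond their respective upper limits and apply a single dominated-convergence argument on $[0,b-a]$, with dominating function $2\|f\|_\infty\,|g(u)|\,\mathbbm{1}_{[0,b-a]}(u)$, which is integrable by (i). This also transparently explains why the conclusion is stated for the open interval $(a,b)$ rather than $[a,b]$: no continuity is claimed at the left endpoint $t=a$, where the integral degenerates.

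Alternatively, and perhaps more in the spirit of the preceding development, one could deduce this result from the series formula \eqref{Eseries:eqn}: each term $a_n \Gamma(\beta n + \alpha)\,\prescript{RL}{}I_{a+}^{\alpha+n\beta}f$ is continuous on $(a,b)$ by the classical mapping properties of the Riemann--Liouville integral of a continuous function, and the series converges locally uniformly on $[a,b]$ by Theorem \ref{Eseries:thm}, so the uniform limit of continuous functions is continuous. I would regard the direct argument above as the main line and mention this series-based route as a shorter alternative that leverages the machinery already built.
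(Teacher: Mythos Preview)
Your proposal is correct. Interestingly, the paper's own proof is precisely your \emph{alternative} route and nothing else: it observes that $C[a,b]\subset L^1[a,b]$ so the operator is defined (Theorem~\ref{L1:thm}), invokes the classical fact that $\prescript{RL}{}I_{a+}^{\nu}$ maps continuous functions to continuous functions on $(a,b)$, and then uses the locally uniformly convergent series \eqref{Eseries:eqn} to conclude that $\prescript{A}{}I_{a+}^{\alpha,\beta}f$ is a uniform limit of continuous functions, hence continuous. Your main line---the direct kernel estimate via the substitution $u=t-\tau$, splitting into the ``moving endpoint'' piece and the ``$f(t-u)-f(t_0-u)$'' piece, and controlling each with integrability of $g(u)=u^{\alpha-1}A(u^\beta)$ plus uniform continuity of $f$---is a genuinely different and more elementary argument. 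It has the advantage of being self-contained (it does not rely on the series formula or on the classical RL mapping result), and it makes transparent exactly where the singularity at $u=0$ enters and why only $(a,b)$ and not $[a,b]$ is claimed. The paper's approach, by contrast, is shorter and better aligned with the paper's broader theme of reducing properties of $\prescript{A}{}I_{a+}^{\alpha,\beta}$ to known properties of Riemann--Liouville integrals via Theorem~\ref{Eseries:thm}. Since you already identified this as the alternative ``more in the spirit of the preceding development,'' you have in fact anticipated the paper's proof; you simply reversed which argument is primary and which is the aside.
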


\begin{proof}
First we note that, by Theorem \ref{L1:thm}, the operator $\prescript{A}{}I_{a+}^{\alpha,\beta}$ is well-defined on $C[a,b]$ since this function space is a subset of $L^1[a,b]$.

If $f$ is continuous on the open interval $(a,b)$, then so is its Riemann--Liouville integral $\prescript{RL}{}I_{a+}^{\nu}$ for any $\nu$ with positive real part \cite{samko-kilbas-marichev}. And the series \eqref{Eseries:eqn} is locally uniformly convergent. So we have an expression for $\prescript{A}{}I_{a+}^{\alpha,\beta}f(t)$ as a locally uniformly convergent series of functions in $C(a,b)$, which must itself be in $C(a,b)$ as required.
\end{proof}

\begin{theorem}
\label{RL:E:thm}
Let $a$, $b$, $A$ be as in Definition \ref{E:defn}. For any $f\in L^1[a,b]$ and $\alpha,\beta,\gamma\in\mathbb{C}$ with non-negative real parts, the composition of Riemann--Liouville fractional integrals with our generalised operators is given by:
\begin{align}
\label{RL:E:eqn}
\prescript{RL}{}I_{a+}^{\gamma}\circ\prescript{A}{}I_{a+}^{\alpha,\beta}f(t)=\prescript{A}{}I_{a+}^{\alpha,\beta}\circ\prescript{RL}{}I_{a+}^{\gamma}f(t)&=\prescript{A}{}I_{a+}^{\alpha+\gamma,\beta}f(t) \\
\nonumber &=A_{\Gamma}\left(\prescript{RL}{}I_{a+}^{\beta}\right)\prescript{RL}{}I_{a+}^{\alpha+\gamma}f(t).
\end{align}
\end{theorem}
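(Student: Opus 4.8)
The plan is to reduce everything to classical Riemann--Liouville integrals via the series formula of Theorem~\ref{Eseries:thm}, and then to exploit the classical semigroup law $\prescript{RL}{}I_{a+}^{\gamma}\circ\prescript{RL}{}I_{a+}^{\delta}=\prescript{RL}{}I_{a+}^{\gamma+\delta}$ (valid on $L^1[a,b]$ whenever both orders have non-negative real part). Since each term of the general operator is, up to a $\Gamma$-factor, merely an RL integral, this single classical identity should deliver all the asserted equalities simultaneously, with the two compositions collapsing to the same series regardless of the side on which $\prescript{RL}{}I_{a+}^{\gamma}$ is applied.

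First I would write, by Theorem~\ref{Eseries:thm}, $\prescript{A}{}I_{a+}^{\alpha,\beta}f=\sum_{n=0}^{\infty}a_n\Gamma(\beta n+\alpha)\,\prescript{RL}{}I_{a+}^{\alpha+n\beta}f$, and apply $\prescript{RL}{}I_{a+}^{\gamma}$ on the left. The idea is to move $\prescript{RL}{}I_{a+}^{\gamma}$ inside the summation and collapse each $\prescript{RL}{}I_{a+}^{\gamma}\circ\prescript{RL}{}I_{a+}^{\alpha+n\beta}$ to $\prescript{RL}{}I_{a+}^{\alpha+\gamma+n\beta}$ by the semigroup law, producing $\sum_{n=0}^{\infty}a_n\Gamma(\beta n+\alpha)\,\prescript{RL}{}I_{a+}^{\alpha+\gamma+n\beta}f$, which is exactly $A_{\Gamma}\big(\prescript{RL}{}I_{a+}^{\beta}\big)\prescript{RL}{}I_{a+}^{\alpha+\gamma}f$ by Definition~\ref{AGamma:defn} -- the last line of the statement. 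For the reversed composition I would first observe that $\prescript{RL}{}I_{a+}^{\gamma}f\in L^1[a,b]$ (by the classical mapping property $\prescript{RL}{}I_{a+}^{\gamma}:L^1\to L^1$, a special case of Theorem~\ref{L1:thm}), feed this into the series formula, and again collapse $\prescript{RL}{}I_{a+}^{\alpha+n\beta}\circ\prescript{RL}{}I_{a+}^{\gamma}$ through the semigroup law; this yields the identical series, which proves $\prescript{RL}{}I_{a+}^{\gamma}\circ\prescript{A}{}I_{a+}^{\alpha,\beta}f=\prescript{A}{}I_{a+}^{\alpha,\beta}\circ\prescript{RL}{}I_{a+}^{\gamma}f$. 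Finally I would read the same series backwards through Theorem~\ref{Eseries:thm} to identify it as a general operator with shifted first index $\alpha+\gamma$.

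The main obstacle I anticipate is justifying the interchange of $\prescript{RL}{}I_{a+}^{\gamma}$ with the infinite summation: applying a fractional integral term by term to a series is only legitimate once convergence is controlled. Here I would lean on the local uniform convergence already established in Theorem~\ref{Eseries:thm}, together with the boundedness of $\prescript{RL}{}I_{a+}^{\gamma}$ on $L^1[a,b]$, passing the operator through the sum via a uniform-bound or dominated-convergence argument on compact subintervals of $(a,b)$. The one book-keeping point I would watch most carefully is exactly which $\alpha$ enters the gamma factors: after the semigroup collapse the surviving coefficient is $\Gamma(\beta n+\alpha)$ with the \emph{original} $\alpha$, i.e. the $A_{\Gamma}$ attached to the unshifted index, so before declaring the final expression equal to $\prescript{A}{}I_{a+}^{\alpha+\gamma,\beta}f$ I would verify that this reading of the gamma coefficients is the one intended.
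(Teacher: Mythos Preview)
Your approach is exactly the paper's: the authors' proof is a single sentence invoking the series formulae \eqref{Eseries:eqn}--\eqref{Eseries:AGamma} together with the semigroup property of Riemann--Liouville integrals, and you have simply spelled out the same steps with more care about the interchange of sum and operator. Your caution about which $\alpha$ appears in the $\Gamma$-factor is well-placed (the paper glosses over it entirely), but the overall strategy---reduce to the series, collapse via the RL semigroup, and read off the $A_\Gamma$ form---is identical.
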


\begin{proof}
This follows directly from the expressions \eqref{Eseries:eqn} and \eqref{Eseries:AGamma} for $\prescript{A}{}I_{a+}^{\alpha,\beta}f(t)$, using the semigroup property of Riemann--Liouville fractional integrals.
\end{proof}

Theorem \ref{RL:E:thm} is interesting because it may provide us with a way of solving fractional integro-differential equations that involve both Riemann--Liouville operators and our new generalised operators. We investigate such problems in more detail in Section \ref{sec:CauchyVolterra} below. 

\begin{theorem}
\label{commutativity:thm}
Let $a$, $b$, $A$ be as in Definition \ref{E:defn}. The set \[\left\{\prescript{A}{}I_{a+}^{\alpha,\beta}:\alpha,\beta\in\mathbb{C},\mathrm{Re}(\alpha)\geq0,\mathrm{Re}(\beta)\geq0\right\}\] forms a commutative family of operators on the function space $L^1[a,b]$.
\end{theorem}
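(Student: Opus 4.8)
The plan is to reduce both operators to series of Riemann--Liouville integrals by means of the series formula of Theorem \ref{Eseries:thm}, and then to exploit the fact that Riemann--Liouville integrals of all orders commute with one another. Fix two parameter pairs $(\alpha_1,\beta_1)$ and $(\alpha_2,\beta_2)$ with non-negative real parts, and write $a_n^{(i)}=a_n(\alpha_i,\beta_i)$ for the coefficients of the associated analytic functions (these may differ between the two operators if $A$ is allowed to depend on its parameters, but this will not matter). Theorem \ref{Eseries:thm} gives, for any $f\in L^1[a,b]$, the locally uniformly convergent expansions
\[
\prescript{A}{}I_{a+}^{\alpha_i,\beta_i}f=\sum_{n=0}^{\infty}a_n^{(i)}\Gamma(\beta_i n+\alpha_i)\,\prescript{RL}{}I_{a+}^{\alpha_i+n\beta_i}f,\qquad i=1,2.
\]
The crucial structural input is the semigroup law $\prescript{RL}{}I_{a+}^{\mu}\prescript{RL}{}I_{a+}^{\nu}=\prescript{RL}{}I_{a+}^{\mu+\nu}$ (already invoked in Theorem \ref{RL:E:thm}), which in particular makes the Riemann--Liouville integrals into a \emph{commutative} family, since $\mu+\nu=\nu+\mu$.

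Carrying this out, I would apply $\prescript{A}{}I_{a+}^{\alpha_1,\beta_1}$ to the series for $\prescript{A}{}I_{a+}^{\alpha_2,\beta_2}f$, pass this bounded operator through the sum term by term, and combine each resulting pair of Riemann--Liouville integrals via the semigroup law to obtain the double series
\[
\prescript{A}{}I_{a+}^{\alpha_1,\beta_1}\prescript{A}{}I_{a+}^{\alpha_2,\beta_2}f=\sum_{m=0}^{\infty}\sum_{n=0}^{\infty}a_m^{(1)}a_n^{(2)}\Gamma(\beta_1 m+\alpha_1)\Gamma(\beta_2 n+\alpha_2)\,\prescript{RL}{}I_{a+}^{\alpha_1+\alpha_2+m\beta_1+n\beta_2}f.
\]
The right-hand side is manifestly invariant under the simultaneous exchange $(\alpha_1,\beta_1,m)\leftrightarrow(\alpha_2,\beta_2,n)$: the order of the Riemann--Liouville integral depends symmetrically on the two pairs, and the scalar prefactors merely swap. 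Hence this equals $\prescript{A}{}I_{a+}^{\alpha_2,\beta_2}\prescript{A}{}I_{a+}^{\alpha_1,\beta_1}f$, which is the desired commutativity. Conceptually, in the notation of \eqref{Eseries:AGamma}, each operator is a power series in the commuting Riemann--Liouville integrals, and power series in commuting operators necessarily commute.

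The hard part will be justifying the two interchanges of limits that make the displayed double series legitimate, namely drawing the operator $\prescript{RL}{}I_{a+}^{\alpha_1+m\beta_1}$ inside the infinite sum defining $\prescript{A}{}I_{a+}^{\alpha_2,\beta_2}f$, and then rearranging the iterated sum into an unordered double sum. For this I would prove absolute convergence in the $L^1$ norm. Specialising the operator-norm estimate from the proof of Theorem \ref{L1:thm} to a Riemann--Liouville integral yields $\|\prescript{RL}{}I_{a+}^{\sigma}g\|_1\leq\frac{(b-a)^{\mathrm{Re}(\sigma)}}{\mathrm{Re}(\sigma)\,|\Gamma(\sigma)|}\|g\|_1$ for $\mathrm{Re}(\sigma)>0$ (with the order-zero term handled directly, as the identity). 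Taking $\sigma=\alpha_i+n\beta_i$ makes the factor $|\Gamma(\beta_i n+\alpha_i)|$ cancel against the $|\Gamma(\sigma)|$ in the denominator, so the $L^1$ size of the $n$th term is controlled by $|a_n^{(i)}|(b-a)^{\mathrm{Re}(\alpha_i)+n\,\mathrm{Re}(\beta_i)}$; the series $\sum_n|a_n^{(i)}|(b-a)^{n\,\mathrm{Re}(\beta_i)}$ converges because $A$ is analytic on a disc of radius exceeding $(b-a)^{\mathrm{Re}(\beta_i)}$. With absolute convergence of the full double series in hand, Fubini's theorem for series licenses each rearrangement, and the symmetry observation above closes the proof.
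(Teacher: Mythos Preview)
Your proposal is correct and follows essentially the same route as the paper: expand both operators via the series formula of Theorem~\ref{Eseries:thm}, then use the commutativity (equivalently, the semigroup property) of Riemann--Liouville integrals to see that the resulting double series is symmetric under swapping the two parameter pairs. The paper's proof is briefer---it simply writes out the double sum and swaps the Riemann--Liouville factors, appealing to the locally uniform convergence already established in Theorem~\ref{Eseries:thm} rather than giving the explicit $L^1$ absolute-convergence estimate you outline---but the substance is the same.
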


\begin{proof}
This follows directly from Theorem \ref{Eseries:thm} and the commutativity of Riemann--Liouville fractional integrals. Explicitly, we have:
\begin{align*}
\prescript{A}{}I_{a+}^{\alpha_1,\beta_1}&\circ\prescript{A}{}I_{a+}^{\alpha_2,\beta_2}f(t) \\
&=\sum_{n=0}^{\infty}a_n\Gamma(\beta_1 n+\alpha_1)\prescript{RL}{}I_{a+}^{\alpha_1+n\beta_1}\left[\sum_{m=0}^{\infty}a_m\Gamma(\beta_2 m+\alpha_2)\prescript{RL}{}I_{a+}^{\alpha_2+m\beta_2}f(t)\right] \\
&=\sum_{m,n}a_n\Gamma(\beta_1 n+\alpha_1)a_m\Gamma(\beta_2 m+\alpha_2)\prescript{RL}{}I_{a+}^{\alpha_1+n\beta_1}\circ\prescript{RL}{}I_{a+}^{\alpha_2+m\beta_2}f(t) \\
&=\sum_{m,n}a_m\Gamma(\beta_2 m+\alpha_2)a_n\Gamma(\beta_1 n+\alpha_1)\prescript{RL}{}I_{a+}^{\alpha_2+m\beta_2}\circ\prescript{RL}{}I_{a+}^{\alpha_1+n\beta_1}f(t) \\
&=\prescript{A}{}I_{a+}^{\alpha_2,\beta_2}\circ\prescript{A}{}I_{a+}^{\alpha_1,\beta_1}f(t).
\end{align*}
\end{proof}

\begin{theorem}[Semigroup property in one parameter]
\label{semigroup:thm}
Let $a$, $b$, $A$ be as in Definition \ref{E:defn}, and fix $\alpha_1,\alpha_2,\beta\in\mathbb{C}$ with non-negative real parts. The semigroup property \[\prescript{A}{}I_{a+}^{\alpha_1,\beta}\circ\prescript{A}{}I_{a+}^{\alpha_2,\beta}f(t)=\prescript{A}{}I_{a+}^{\alpha_1+\alpha_2,\beta}f(t)\] is uniformly valid (regardless of $\alpha_1$, $\alpha_2$, $\beta$, and $f$) if and only if the following condition is satisfied for all non-negative integers $k$:
\begin{align}
\label{semigroup:condn} \sum_{m+n=k}a_n(\alpha_1,\beta)a_m(\alpha_2,\beta)B(\beta n+\alpha_1,\beta m+\alpha_2)=a_k(\alpha_1+\alpha_2,\beta).
\end{align}
\end{theorem}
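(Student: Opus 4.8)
The plan is to expand both sides of the claimed identity using the series formula \eqref{Eseries:eqn} and thereby reduce the semigroup property to an equality between two series of classical Riemann--Liouville integrals, whose coefficients can be compared term by term. First I would compute the composition exactly as in the proof of Theorem \ref{commutativity:thm}: applying \eqref{Eseries:eqn} to each factor and using the classical semigroup law $\prescript{RL}{}I_{a+}^{\mu}\circ\prescript{RL}{}I_{a+}^{\nu}=\prescript{RL}{}I_{a+}^{\mu+\nu}$, the left-hand side becomes the double series
\[
\sum_{m,n\geq0}a_n(\alpha_1,\beta)a_m(\alpha_2,\beta)\Gamma(\beta n+\alpha_1)\Gamma(\beta m+\alpha_2)\,\prescript{RL}{}I_{a+}^{\alpha_1+\alpha_2+(m+n)\beta}f(t).
\]
The convergence established in Theorem \ref{Eseries:thm} lets me rearrange this sum along the diagonals $k=m+n$, collecting the coefficient of each operator $\prescript{RL}{}I_{a+}^{\alpha_1+\alpha_2+k\beta}$.

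Comparing with the right-hand side, which by \eqref{Eseries:eqn} equals $\sum_{k}a_k(\alpha_1+\alpha_2,\beta)\Gamma(\beta k+\alpha_1+\alpha_2)\,\prescript{RL}{}I_{a+}^{\alpha_1+\alpha_2+k\beta}f(t)$, the semigroup property holds for all $f$ precisely when the coefficients of $\prescript{RL}{}I_{a+}^{\alpha_1+\alpha_2+k\beta}f$ agree for every $k$, that is
\[
\sum_{m+n=k}a_n(\alpha_1,\beta)a_m(\alpha_2,\beta)\Gamma(\beta n+\alpha_1)\Gamma(\beta m+\alpha_2)=a_k(\alpha_1+\alpha_2,\beta)\Gamma(\beta k+\alpha_1+\alpha_2).
\]
Since $\beta k+\alpha_1+\alpha_2=(\beta n+\alpha_1)+(\beta m+\alpha_2)$ whenever $m+n=k$, dividing through by $\Gamma(\beta k+\alpha_1+\alpha_2)$ and invoking the Beta identity $B(x,y)=\Gamma(x)\Gamma(y)/\Gamma(x+y)$ converts each summand into $a_n a_m B(\beta n+\alpha_1,\beta m+\alpha_2)$, giving exactly the stated condition \eqref{semigroup:condn}. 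The ``if'' direction then follows at once, since \eqref{semigroup:condn} makes the two coefficient sequences identical and hence the two series coincide for every $f$.

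The hard part will be the ``only if'' direction, which requires justifying that matching the coefficients is genuinely \emph{necessary}; this rests on the linear independence of the family $\{\prescript{RL}{}I_{a+}^{\alpha_1+\alpha_2+k\beta}\}_{k\geq0}$. I would establish this by testing the identity on a concrete function: taking $f\equiv1$ and using $\prescript{RL}{}I_{a+}^{\nu}1=(t-a)^{\nu}/\Gamma(\nu+1)$, the difference of the two sides becomes a locally uniformly convergent combination $\sum_k d_k(t-a)^{\alpha_1+\alpha_2+k\beta}$ of powers of $(t-a)$ with pairwise distinct exponents whenever $\mathrm{Re}(\beta)>0$; as such powers are linearly independent on $(a,b)$, every $d_k$ must vanish, forcing the coefficient equality above. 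The degenerate case $\beta=0$ collapses all exponents and must be noted separately, but since the identity is required to hold for all admissible $\beta$, it suffices to apply the argument for $\beta$ with positive real part and then extend to $\mathrm{Re}(\beta)=0$ by continuity in $\beta$.
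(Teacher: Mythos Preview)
Your approach is essentially identical to the paper's: both expand the composition via the series formula \eqref{Eseries:eqn}, regroup along diagonals $k=m+n$, compare with the series for $\prescript{A}{}I_{a+}^{\alpha_1+\alpha_2,\beta}$, and then rewrite the resulting coefficient condition using $B(x,y)=\Gamma(x)\Gamma(y)/\Gamma(x+y)$. The paper simply asserts that the two series are ``always equal if and only if'' the coefficients match, whereas you supply an explicit justification of the necessity direction (testing on $f\equiv1$ to exploit linear independence of the powers $(t-a)^{\alpha_1+\alpha_2+k\beta}$), so your write-up is in fact more careful than the original on that point.
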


\begin{proof}
We saw in the proof of Theorem \ref{commutativity:thm} that
\begin{align*}
\prescript{A}{}I_{a+}^{\alpha_1,\beta}\circ\prescript{A}{}I_{a+}^{\alpha_2,\beta}f(t)&=\sum_{m,n}a_n\Gamma(\beta n+\alpha_1)a_m\Gamma(\beta m+\alpha_2)\prescript{RL}{}I_{a+}^{\alpha_1+n\beta}\circ\prescript{RL}{}I_{a+}^{\alpha_2+m\beta}f(t) \\
&=\sum_{m,n}a_na_m\Gamma(\beta n+\alpha_1)\Gamma(\beta m+\alpha_2)\prescript{RL}{}I_{a+}^{\alpha_1+\alpha_2+(n+m)\beta}f(t) \\
&=\sum_{k=0}^{\infty}\left[\sum_{m+n=k}a_na_m\Gamma(\beta n+\alpha_1)\Gamma(\beta m+\alpha_2)\right]\prescript{RL}{}I_{a+}^{\alpha_1+\alpha_2+k\beta}f(t).
\end{align*}
Meanwhile, the series formula \eqref{Eseries:eqn} yields \[\prescript{A}{}I_{a+}^{\alpha_1+\alpha_2,\beta}f(t)=\sum_{k=0}^{\infty}a_k\Gamma(\beta k+\alpha_1+\alpha_2)\prescript{RL}{}I_{a+}^{\alpha_1+\alpha_2+k\beta}f(t).\] Clearly these two expressions are always equal if and only if 
\begin{equation}
\label{semigroup:condn2}
\sum_{m+n=k}a_n(\alpha_1,\beta)a_m(\alpha_2,\beta)\Gamma(\beta n+\alpha_1)\Gamma(\beta m+\alpha_2)=a_k(\alpha_1+\alpha_2,\beta)\Gamma(\beta k+\alpha_1+\alpha_2)
\end{equation}
for all $k=0,1,2,\dots$, which is equivalent to \eqref{semigroup:condn} by definition of the beta function.
\end{proof}

\begin{remark}
Let us examine the condition \eqref{semigroup:condn2} with increasing values of $k$.

For $k=0$, the equation becomes \[a_0(\alpha_1,\beta)a_0(\alpha_2,\beta)\Gamma(\alpha_1)\Gamma(\alpha_2)=a_0(\alpha_1+\alpha_2,\beta)\Gamma(\alpha_1+\alpha_2).\] Therefore we need $a_0(\alpha,\beta)=\frac{1}{\Gamma(\alpha)}$ for the equation to be uniformly valid.

For $k=1$, the equation becomes \[a_0(\alpha_1,\beta)a_1(\alpha_2,\beta)\Gamma(\alpha_1)\Gamma(\beta+\alpha_2)+a_1(\alpha_1,\beta)a_0(\alpha_2,\beta)\Gamma(\beta+\alpha_1)\Gamma(\alpha_2)=a_1(\alpha_1+\alpha_2,\beta)\Gamma(\beta+\alpha_1+\alpha_2).\] After substituting the known expression for $a_0$, this becomes \[a_1(\alpha_2,\beta)\Gamma(\beta+\alpha_2)+a_1(\alpha_1,\beta)\Gamma(\beta+\alpha_1)=a_1(\alpha_1+\alpha_2,\beta)\Gamma(\beta+\alpha_1+\alpha_2).\] There are many possibilities for $a_1$ which would satisfy this identity. Note that the trivial solution $a_1=0$, followed by setting $a_2=0$, $a_3=0$, etc. to ensure that \eqref{semigroup:condn2} remains valid for all $k$, would yield precisely the Riemann--Liouville fractional model as specified by \eqref{ERL:fns}.
\end{remark}

\begin{theorem}
\label{semigroup2:thm}
Let $a$, $b$, $A$ be as in Definition \ref{E:defn}, and fix $\alpha_1,\alpha_2,\beta_1,\beta_2\in\mathbb{C}$ with non-negative real parts. The semigroup property \[\prescript{A}{}I_{a+}^{\alpha_1,\beta_1}\circ\prescript{A}{}I_{a+}^{\alpha_2,\beta_2}f(t)=\prescript{A}{}I_{a+}^{\alpha_1+\alpha_2,\beta_1+\beta_2}f(t)\] cannot be uniformly valid for arbitrary $\alpha_1$, $\alpha_2$, $\beta_1$, $\beta_2$, and $f$.
\end{theorem}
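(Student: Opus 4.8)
The plan is to exploit the series formula of Theorem~\ref{Eseries:thm}, exactly as in the proofs of Theorems~\ref{commutativity:thm} and~\ref{semigroup:thm}, and then to compare the two sides order-by-order in their Riemann--Liouville expansions. Writing the composition on the left as a double series,
\[
\prescript{A}{}I_{a+}^{\alpha_1,\beta_1}\circ\prescript{A}{}I_{a+}^{\alpha_2,\beta_2}f(t)=\sum_{m,n\geq0}a_n(\alpha_1,\beta_1)a_m(\alpha_2,\beta_2)\Gamma(\beta_1 n+\alpha_1)\Gamma(\beta_2 m+\alpha_2)\prescript{RL}{}I_{a+}^{\alpha_1+\alpha_2+n\beta_1+m\beta_2}f(t),
\]
while the right-hand side is the single series
\[
\prescript{A}{}I_{a+}^{\alpha_1+\alpha_2,\beta_1+\beta_2}f(t)=\sum_{k\geq0}a_k(\alpha_1+\alpha_2,\beta_1+\beta_2)\Gamma\big((\beta_1+\beta_2)k+\alpha_1+\alpha_2\big)\prescript{RL}{}I_{a+}^{\alpha_1+\alpha_2+k(\beta_1+\beta_2)}f(t),
\]
the crucial structural difference is already visible in the exponents: on the left the Riemann--Liouville orders run through $\alpha_1+\alpha_2+n\beta_1+m\beta_2$ for \emph{all} pairs $(n,m)$, whereas on the right only the diagonal combinations $k(\beta_1+\beta_2)=k\beta_1+k\beta_2$ appear.

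First I would establish that Riemann--Liouville integrals of distinct orders are linearly independent as operators, so that two such series coincide only if their coefficients agree order-by-order. This is immediate upon testing on $f\equiv1$, for which $\prescript{RL}{}I_{a+}^{\nu}f(t)=(t-a)^{\nu}/\Gamma(\nu+1)$, together with the linear independence of the power functions $(t-a)^{\nu}$ for distinct exponents $\nu$. Matching the two series then yields, for each $k$, the constraint
\[
\sum_{\substack{n,m\geq0\\ n\beta_1+m\beta_2=k(\beta_1+\beta_2)}}a_n(\alpha_1,\beta_1)a_m(\alpha_2,\beta_2)\Gamma(\beta_1 n+\alpha_1)\Gamma(\beta_2 m+\alpha_2)=a_k(\alpha_1+\alpha_2,\beta_1+\beta_2)\Gamma\big((\beta_1+\beta_2)k+\alpha_1+\alpha_2\big).
\]

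Next I would choose the free parameters so as to separate the exponents: taking $\beta_1,\beta_2$ real and positive with $\beta_1/\beta_2\notin\mathbb{Q}$, the map $(n,m)\mapsto n\beta_1+m\beta_2$ is injective, so the only pair solving $n\beta_1+m\beta_2=k(\beta_1+\beta_2)$ is $(n,m)=(k,k)$. In particular the term $(n,m)=(1,0)$, producing a Riemann--Liouville integral of order $\alpha_1+\alpha_2+\beta_1$, has no counterpart on the right, so its coefficient must vanish identically in $\alpha_1,\alpha_2$, forcing $a_1(\alpha_1,\beta_1)a_0(\alpha_2,\beta_2)\Gamma(\beta_1+\alpha_1)\Gamma(\alpha_2)=0$. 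The $k=0$ comparison pins down $a_0(\alpha,\beta)\neq0$ (indeed $a_0(\alpha,\beta)=1/\Gamma(\alpha)$, exactly as in the Remark following Theorem~\ref{semigroup:thm}), whence $a_1\equiv0$; iterating this reasoning on the off-diagonal orders $\alpha_1+\alpha_2+n\beta_1$ successively annihilates every $a_n$ with $n\geq1$. Thus uniform validity would force the kernel to collapse to the constant $A=1/\Gamma(\alpha)$, that is, to the purely Riemann--Liouville case in which $\beta$ plays no role, so no operator with a genuinely $\beta$-dependent analytic kernel can satisfy the property.

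The main obstacle I anticipate is precisely this degenerate Riemann--Liouville case: since $\prescript{A}{}I_{a+}^{\alpha,\beta}$ is independent of $\beta$ once $a_n\equiv0$ for all $n\geq1$, the two-parameter identity does hold trivially there, and the content of the theorem is really that no operator with non-trivial dependence on $\beta$ can enjoy it. This exceptional case must be flagged and set aside rather than contradicted. A secondary technical point is to ensure the exponent-matching is not spoiled by accidental coincidences $n\beta_1+m\beta_2=k(\beta_1+\beta_2)$; restricting to $\beta_1/\beta_2\notin\mathbb{Q}$ is legitimate (the claim concerns arbitrary, hence in particular such, values of $\beta_1,\beta_2$) and removes all of these, making the separation of orders clean.
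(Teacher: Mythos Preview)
Your approach is essentially the same as the paper's: expand both sides via the series formula of Theorem~\ref{Eseries:thm}, match the Riemann--Liouville orders appearing, and observe that the off-diagonal terms $n\beta_1+m\beta_2$ with $n\neq m$ on the left have no counterpart on the right, forcing the coefficients $a_n$ to collapse. The paper carries this out more briskly, simply asserting that the off-diagonal condition $a_n a_m\Gamma(\cdots)\Gamma(\cdots)=0$ for all $m\neq n$ ``implies $A=0$, which makes the whole problem trivial.''

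Your version is in fact more careful on two points the paper leaves implicit. First, you justify why distinct RL orders can be separated (testing on $f\equiv1$ and invoking linear independence of powers), and you explicitly pick $\beta_1/\beta_2\notin\mathbb{Q}$ to rule out accidental coincidences $n\beta_1+m\beta_2=k(\beta_1+\beta_2)$; the paper's splitting into diagonal and off-diagonal parts tacitly assumes such genericity. Second, you correctly note that the forced conclusion is not literally $A=0$ but rather $a_n\equiv0$ for $n\geq1$, i.e.\ the degenerate Riemann--Liouville case in which the operator is $\beta$-independent and the two-parameter identity holds vacuously; the paper's phrasing slightly overstates this. Both arguments lead to the same substantive conclusion, and your added precision is warranted.
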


\begin{proof}
Again we can use the composition formula found in the proof of Theorem \ref{commutativity:thm}:
\begin{align*}
\prescript{A}{}I_{a+}^{\alpha_1,\beta_1}\circ\prescript{A}{}I_{a+}^{\alpha_2,\beta_2}f(t)&=\sum_{m,n}a_n\Gamma(\beta_1 n+\alpha_1)a_m\Gamma(\beta_2 m+\alpha_2)\prescript{RL}{}I_{a+}^{\alpha_1+\alpha_2+ n\beta_1+m\beta_2}f(t) \\
\begin{split}
&=\sum_{k=0}^{\infty}a_ka_k\Gamma(\beta_1 k+\alpha_1)\Gamma(\beta_2 k+\alpha_2)\prescript{RL}{}I_{a+}^{\alpha_1+\alpha_2+ k(\beta_1+\beta_2)}f(t) \\
&\hspace{2cm}+\sum_{m\neq n}a_n\Gamma(\beta_1 n+\alpha_1)a_m\Gamma(\beta_2 m+\alpha_2)\prescript{RL}{}I_{a+}^{\alpha_1+\alpha_2+ n\beta_1+m\beta_2}f(t).
\end{split}
\end{align*}
Meanwhile, the series formula \eqref{Eseries:eqn} yields \[\prescript{A}{}I_{a+}^{\alpha_1+\alpha_2,\beta_1+\beta_2}f(t)=\sum_{k=0}^{\infty}a_k\Gamma((\beta_1+\beta_2)k+\alpha_1+\alpha_2)\prescript{RL}{}I_{a+}^{\alpha_1+\alpha_2+ k(\beta_1+\beta_2)}f(t).\] For uniform equality, we need to have both \[a_k(\alpha_1,\beta_1)a_k(\alpha_2,\beta_2)\Gamma(\beta_1 k+\alpha_1)\Gamma(\beta_2 k+\alpha_2)=a_k(\alpha_1+\alpha_2,\beta_1+\beta_2)\Gamma((\beta_1+\beta_2)k+\alpha_1+\alpha_2)\] for all $k\in\mathbb{Z}^+_0$ and also \[a_n(\alpha_1,\beta_1)\Gamma(\beta_1 n+\alpha_1)a_m(\alpha_2,\beta_2)\Gamma(\beta_2 m+\alpha_2)=0\] for all distinct $m,n\in\mathbb{Z}^+_0$. But the latter condition implies $A=0$, which makes the whole problem trivial.
\end{proof}

\begin{remark}
The Prabhakar operator does have a semigroup property in two parameters \cite{prabhakar}, but these two parameters are -- in the notation of \eqref{EPRAB:fns} -- $\alpha$ and $\rho$, not $\alpha$ and $\beta$. So the result of Theorem \ref{semigroup2:thm} does not contradict this property of Prabhakar operators.
\end{remark}

\subsection{Fractional derivatives}

In Definition \ref{E:defn} we saw a way to define fractional integrals with general analytic kernel functions. But several of the familiar special cases of this formula, such as the AB fractional derivatives \eqref{EABR:eqn}--\eqref{EABC:eqn}, required taking derivatives as well as applying the integral operator \eqref{EIdef}. This is natural, because defining fractional derivatives in terms of classical derivatives and fractional integrals has been a well-established practice starting from Riemann--Liouville \eqref{RLdef:deriv}. So it now makes sense to ask: given the fractional integral operator $\prescript{A}{}I_{a+}^{\alpha,\beta}$, how might we define a corresponding fractional differential operator?

Clearly we will have operators of both Riemann--Liouville and Caputo type, according to whether we apply the derivative inside or outside the integration. We guess that we should consider operators
\begin{align}
\label{ERdef} \prescript{A}{RL}D_{a+}^{\alpha,\beta}f(t)&=\frac{\mathrm{d}^m}{\mathrm{dt^m}}\left(\prescript{A}{}I_{a+}^{\alpha',\beta'}f(t)\right), \\
\label{ECdef} \prescript{A}{C}D_{a+}^{\alpha,\beta}f(t)&=\prescript{A}{}I_{a+}^{\alpha',\beta'}\left(\frac{\mathrm{d}^m}{\mathrm{dt^m}}f(t)\right),
\end{align}
where the natural number $m$ and the orders $\alpha'$, $\beta'$ depend on $\alpha$ and $\beta$. The question, then, is how these inter-variable dependences are defined. For the RL integral \eqref{ERL:eqn} and the Prabhakar integral \eqref{EPRAB:eqn}, we would use $\beta'=\beta$ and $m+\alpha'=\alpha$. For the AB derivatives \eqref{EABR:eqn} and \eqref{EABC:eqn}, we would use $\alpha'=\alpha$ and $m+\beta'=\beta$. What happens in the general case?

We note the following series formula as a corollary of Theorem \ref{Eseries:thm}.

\begin{corollary}
For any $m\in\mathbb{N}$, with all notation as in Theorem \ref{Eseries:thm}, we have
\begin{align}
\label{Eseries:deriv} \frac{\mathrm{d}^m}{\mathrm{dt^m}}\left(\prescript{A}{}I_{a+}^{\alpha,\beta}f(t)\right)&=\sum_{n=0}^{\infty}a_n\Gamma(\beta n+\alpha)\prescript{RL}{}I_{a+}^{\alpha+n\beta-m}f(t) \\
\label{Eseries:deriv:AGamma}&=A_{\Gamma}\left(\prescript{RL}{}I_{a+}^{\beta}\right)\prescript{RL}{}I_{a+}^{\alpha-m}f(t).
\end{align}
\end{corollary}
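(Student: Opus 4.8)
The plan is to obtain the two displayed formulas by differentiating the series formula of Theorem~\ref{Eseries:thm} term by term. Starting from
\[
\prescript{A}{}I_{a+}^{\alpha,\beta}f(t)=\sum_{n=0}^{\infty}a_n\Gamma(\beta n+\alpha)\,\prescript{RL}{}I_{a+}^{\alpha+n\beta}f(t),
\]
I would apply $\frac{\mathrm{d}^m}{\mathrm{d}t^m}$ to each summand. The single ingredient needed for the individual terms is the classical reduction rule for Riemann--Liouville operators, namely $\frac{\mathrm{d}}{\mathrm{d}t}\prescript{RL}{}I_{a+}^{\nu}f=\prescript{RL}{}I_{a+}^{\nu-1}f$, iterated $m$ times to give $\frac{\mathrm{d}^m}{\mathrm{d}t^m}\prescript{RL}{}I_{a+}^{\nu}f=\prescript{RL}{}I_{a+}^{\nu-m}f$, where the right-hand side is read as a differintegral (an RL derivative when $\mathrm{Re}(\nu-m)\le 0$ and an RL integral otherwise, in accordance with \eqref{RLdef:int}--\eqref{RLdef:deriv}). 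Applying this to the term of index $n$, with $\nu=\alpha+n\beta$, produces exactly $a_n\Gamma(\beta n+\alpha)\prescript{RL}{}I_{a+}^{\alpha+n\beta-m}f(t)$, which is the summand in \eqref{Eseries:deriv}.

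The step that needs genuine justification --- and which I expect to be the main obstacle --- is the interchange of $\frac{\mathrm{d}^m}{\mathrm{d}t^m}$ with the infinite sum. For this it suffices to show that the differentiated series converges locally uniformly on $(a,b)$, after which termwise differentiation is legitimate. I would split the series into the finitely many indices $n$ for which $\mathrm{Re}(\alpha+n\beta-m)\le 0$ (each contributing a single well-defined RL differintegral of $f$, harmless for convergence) and the tail of indices for which $\mathrm{Re}(\alpha+n\beta-m)$ exceeds $1$. For the tail terms, writing the RL integral explicitly gives
\[
a_n\Gamma(\beta n+\alpha)\prescript{RL}{}I_{a+}^{\alpha+n\beta-m}f(t)=a_n\frac{\Gamma(\beta n+\alpha)}{\Gamma(\alpha+n\beta-m)}\int_a^t(t-\tau)^{\alpha+n\beta-m-1}f(\tau)\,\mathrm{d}\tau,
\]
so that each such term is bounded in modulus by $C\,|a_n|\,\bigl|\tfrac{\Gamma(\beta n+\alpha)}{\Gamma(\alpha+n\beta-m)}\bigr|\,(b-a)^{\mathrm{Re}(\alpha+n\beta-m)-1}\|f\|_1$. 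By Stirling's approximation the Gamma ratio grows only polynomially in $n$ (of order $n^{m\,\mathrm{Re}(\beta)}$), while $\sum_n|a_n|r^n$ converges for every $r<R$; choosing $r=(b-a)^{\mathrm{Re}(\beta)}<R$ and absorbing the polynomial factor (using $n^{p}(r/r')^n\to0$ for $r<r'<R$) shows the tail converges absolutely and uniformly in $t$. This is precisely the point at which the hypothesis $R>(b-a)^{\mathrm{Re}(\beta)}$ is used, and it mirrors the convergence argument already underlying Theorem~\ref{Eseries:thm}.

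Finally, to obtain the compact form \eqref{Eseries:deriv:AGamma}, I would recollect the series \eqref{Eseries:deriv} into operator notation exactly as \eqref{Eseries:eqn} was rewritten as \eqref{Eseries:AGamma}: since $A_{\Gamma}\bigl(\prescript{RL}{}I_{a+}^{\beta}\bigr)=\sum_{n=0}^{\infty}a_n\Gamma(\beta n+\alpha)\prescript{RL}{}I_{a+}^{n\beta}$ and $\prescript{RL}{}I_{a+}^{n\beta}\prescript{RL}{}I_{a+}^{\alpha-m}=\prescript{RL}{}I_{a+}^{\alpha+n\beta-m}$, applying the operator $A_{\Gamma}\bigl(\prescript{RL}{}I_{a+}^{\beta}\bigr)$ to $\prescript{RL}{}I_{a+}^{\alpha-m}f$ reproduces \eqref{Eseries:deriv} termwise. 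Hence both equalities follow, and no work beyond the convergence estimate above is required.
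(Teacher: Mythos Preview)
Your approach is correct and essentially identical to the paper's: differentiate the series \eqref{Eseries:eqn} term by term using the classical identity $\frac{\mathrm{d}^m}{\mathrm{d}t^m}\prescript{RL}{}I_{a+}^{\nu}f=\prescript{RL}{}I_{a+}^{\nu-m}f$, then repackage in terms of $A_{\Gamma}$. The paper's proof is a single sentence invoking exactly this fact, so your added convergence discussion (splitting off finitely many terms and bounding the tail via Stirling and the hypothesis $R>(b-a)^{\mathrm{Re}(\beta)}$) actually supplies more justification for the interchange than the paper does.
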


\begin{proof}
This follows from \eqref{Eseries:eqn}, using the fact that any classical derivative of a Riemann--Liouville differintegral is another RL differintegral of the appropriate order \cite{kilbas-srivastava-trujillo,miller-ross}.
\end{proof}

Unfortunately, except in a few special cases, we are not able to treat the operator \eqref{Eseries:deriv} as an inverse of the integral operator \eqref{EIdef} with the same function $A$. To see why, consider the result of Theorems \ref{semigroup:thm} and \ref{semigroup2:thm}. We know that a semigroup property in two parameters is impossible, and a semigroup property in one parameter would preserve $\beta$. Therefore, if we want a statement of the form \[\frac{\mathrm{d}^m}{\mathrm{dt^m}}\circ\prescript{A}{}I_{a+}^{\alpha,\beta}\circ\prescript{A}{}I_{a+}^{\alpha',\beta'}f(t)=f(t)\] to be true, then we would need $\beta'=\beta$ and the above equation would reduce to \[\frac{\mathrm{d}^m}{\mathrm{dt^m}}\circ\prescript{A}{}I_{a+}^{\alpha+\alpha',\beta}f(t)=f(t).\] But this requires $\beta=0$, so that the operator on the left-hand side is trivial. And if $\beta=\beta'=0$, then the operators essentially reduce to the Riemann--Liouville fractional differintegrals.

The reason why we \textit{are} able to get a left inverse in the form of \eqref{Eseries:deriv} for the Prabhakar operator \cite{kilbas-saigo-saxena} is that the function $A$ is changed slightly between the fractional integral \eqref{PRABdef:int} and the fractional derivative \eqref{PRABdef:deriv}: namely, by negating the extra parameter $\rho$ as well as changing the parameter $\alpha$.

In general, there are two possible approaches which may be used in our new framework to construct a system of differintegral operators with well-defined derivatives, integrals, and an inversion relation:

\begin{enumerate}
\item \textbf{Approach 1.} We can define the fractional \textbf{integral} by an expression of the form \eqref{EIdef}. Then to define the fractional \textbf{derivative}, we need to find a different analytic function $\bar{A}$ which `complements' the original choice of $A$, in the sense that \[\prescript{\bar{A}}{RL}D_{a+}^{\alpha,\beta}\circ\prescript{A}{}I_{a+}^{\alpha,\beta}f=f.\] In order to find an appropriate $\bar{A}$, we can consider the series formula \eqref{Eseries:AGamma} and invert the function $A_{\Gamma}$.

\item \textbf{Approach 2.} Alternatively, we can define the fractional \textbf{derivative} by an expression of the form \eqref{ERdef} or \eqref{ECdef}. Then to define the fractional \textbf{integral}, we need to find an inverse for the fractional differential operator thus defined. This was the approach used to construct the AB model \cite{atangana-baleanu}.
\end{enumerate}

In order to illustrate the general discussion above, we consider specific implementations of these ideas, which may be used to recover some of the fractional models we already know.

Let us consider the Approach 1 described above, and try to find a condition on $\bar{A}$ in terms of $A$.

We use the notation $\mathcal{I}=\prescript{A}{}I_{a+}^{\alpha,\beta}$ for some fixed $\alpha$, $\beta$, and analytic function $A(x)=\sum_{n=0}^{\infty}a_nx^n$. In other words, we define \[\mathcal{I}f(t)=\int_a^t(t-\tau)^{\alpha-1}A\left((t-\tau)^{\beta}\right)f(\tau)\,\mathrm{d}\tau.\] We also use the notation $\mathcal{D}=\prescript{\bar{A}}{RL}D_{a+}^{\alpha,\beta}$, as defined in \eqref{ERdef}, for some fixed $\alpha'$, $\beta'$, $m$, and analytic function $\bar{A}(x)=\sum_{n=0}^{\infty}\bar{a_n}x^n$. In other words, we define \[\mathcal{D}f(t)=\frac{\mathrm{d}^m}{\mathrm{dt^m}}\left(\int_a^t(t-\tau)^{\alpha'-1}\bar{A}\left((t-\tau)^{\beta'}\right)f(\tau)\,\mathrm{d}\tau\right).\] Let us check the conditions for $\mathcal{D}$ to be a left inverse of $\mathcal{I}$, using the expression \eqref{Eseries:AGamma} for these operators as series in terms of $A_{\Gamma}$:
\begin{align*}
\mathcal{D}\circ\mathcal{I}f(t)=f(t)&\Leftrightarrow\left[\bar{A}_{\Gamma}\left(\prescript{RL}{}I_{a+}^{\beta'}\right)\prescript{RL}{}I_{a+}^{\alpha'-m}\right]\circ\left[A_{\Gamma}\left(\prescript{RL}{}I_{a+}^{\beta}\right)\prescript{RL}{}I_{a+}^{\alpha}\right]f(t)=f(t) \\
&\Leftrightarrow\bar{A}_{\Gamma}\left(\prescript{RL}{}I_{a+}^{\beta'}\right)A_{\Gamma}\left(\prescript{RL}{}I_{a+}^{\beta}\right)\prescript{RL}{}I_{a+}^{\alpha+\alpha'-m}f(t)=f(t).
\end{align*}
So the following conditions will suffice to give us a well-defined left inverse operator to \eqref{EIdef}:
\begin{equation}
\label{inverse:condns}
\alpha'=m-\alpha,\quad\quad\beta'=\beta,\quad\quad\bar{A}_{\Gamma}\cdot A_{\Gamma}=1.
\end{equation}

The above discussion can be formalised into the following definition for generalised fractional differentiation operators.

\begin{definition}
\label{Eder:defn}
Let $a$, $b$, $\alpha$, $\beta$, and $A$ be as in Definition \ref{E:defn}. Using Approach 1 as discussed above, we can define the following fractional differential operators, of both Riemann--Liouville and Caputo type, acting on a function $f:[a,b]\rightarrow\mathbb{R}$ with sufficient differentiability properties.
\begin{align}
\label{Eder:defnRL} \prescript{A}{RL}D_{a+}^{\alpha,\beta}f(t)&=\frac{\mathrm{d}^m}{\mathrm{dt^m}}\left(\prescript{\bar{A}}{}I_{a+}^{m-\alpha,\beta}f(t)\right), \\
\label{Eder:defnC} \prescript{A}{C}D_{a+}^{\alpha,\beta}f(t)&=\prescript{\bar{A}}{}I_{a+}^{m-\alpha,\beta}\left(\frac{\mathrm{d}^m}{\mathrm{dt^m}}f(t)\right),
\end{align}
where the function $\bar{A}$ used on the right-hand side is defined such that $A_{\Gamma}(x)\cdot\bar{A}_{\Gamma}(x)=1$. (Here the $\Gamma$-transformed functions are as defined in Definition \ref{AGamma:defn}.)
\end{definition}

\begin{example}
Let us consider the Prabhakar model. Here the fractional integral is defined using the function $A(x)=E^{\rho}_{\beta,\alpha}(\omega x)$, and the fractional derivative is defined by an expression of the form \eqref{ERdef} using $\alpha'=m-\alpha$, $\beta'=\beta$, and the function $\bar{A}(x)=E^{-\rho}_{\beta,m-\alpha}(\omega x)$. In order to verify that \eqref{inverse:condns} is valid, we just need to check the final part. Here we have:
\begin{align*}
A(x)&=E^{\rho}_{\beta,\alpha}(\omega x)=\sum_{n=0}^{\infty}\frac{\Gamma(\rho+n)\omega^n}{\Gamma(\rho)\Gamma(\beta n+\alpha)n!}x^n; \\
A_{\Gamma}(x)&=\sum_{n=0}^{\infty}\frac{\Gamma(\rho+n)\omega^n}{\Gamma(\rho)n!}x^n; \\
\bar{A}(x)&=E^{-\rho}_{\beta,m-\alpha}(\omega x)=\sum_{n=0}^{\infty}\frac{\Gamma(-\rho+n)\omega^n}{\Gamma(-\rho)\Gamma(\beta n+m-\alpha)n!}x^n; \\
\bar{A}_{\Gamma}(x)&=\sum_{n=0}^{\infty}\frac{\Gamma(-\rho+n)\omega^n}{\Gamma(-\rho)n!}x^n; \\
\bar{A}_{\Gamma}\cdot A_{\Gamma}(x)&=\sum_{n_1=0}^{\infty}\sum_{n_2=0}^{\infty}\frac{\Gamma(-\rho+n_1)\Gamma(\rho+n_2)\omega^{n_1+n_2}}{\Gamma(-\rho)\Gamma(\rho)n_1!n_2!}x^{n_1+n_2}=1,
\end{align*}
where in the last line we have used some basic properties of gamma functions \cite{fernandez-baleanu-srivastava} to get the desired result.
\end{example}

Thus the Approach 1, which was outlined above and formalised in Definition \ref{Eder:defn}, is a valid method for inverting our generalised fractional integral operators, which indeed yields the Prabhakar fractional derivative when we start from the Prabhakar fractional integral.

We now demonstrate how Approach 2 may be used to derive the AB fractional integral when starting from the AB fractional derivative.

\begin{example}
In the 2nd method described above, we start from an operator of the form
\begin{equation}
\label{ex:AB:D1}
\mathcal{D}f(t)=\frac{\mathrm{d}^m}{\mathrm{dt^m}}\left(\prescript{A}{}I_{a+}^{\alpha,\beta}f(t)\right)=A_{\Gamma}\left(\prescript{RL}{}I_{a+}^{\beta}\right)\prescript{RL}{}I_{a+}^{\alpha-m}f(t).
\end{equation}
To find the inverse of this operator, it would help to know the reciprocal of the function $A_{\Gamma}$. As a basic example, let us consider the case where $A_{\Gamma}(x)=(1-x)^{-1}$. This yields the following choices of function:
\begin{align*}
A_{\Gamma}(x)&=(1-x)^{-1}=\sum_{n=0}^{\infty}x^n; \\
A(x)&=\sum_{n=0}^{\infty}\frac{x^n}{\Gamma(\beta n+\alpha)}=E_{\beta,\alpha}(x).
\end{align*}
Setting $\alpha=m=1$ for simplicity, we find that
\begin{align}
\nonumber \mathcal{D}f(t)&=\frac{\mathrm{d}}{\mathrm{d}t}\left(\int_a^t(t-\tau)^{\alpha-1}A\left((t-\tau)^{\beta}\right)f(\tau)\,\mathrm{d}\tau\right) \\
\label{ex:AB:D2} &=\frac{\mathrm{d}}{\mathrm{d}t}\left(\int_a^tE_{\beta,\alpha}\left((t-\tau)^{\beta}\right)f(\tau)\,\mathrm{d}\tau\right).
\end{align}
For the inverse operator, we can see from \eqref{ex:AB:D1} that it should be given by
\begin{equation}
\label{ex:AB:I}
\mathcal{I}f(t)=\left(1-\prescript{RL}{}I_{a+}^{\beta}\right)f(t).
\end{equation}
Now the formulae \eqref{ex:AB:D2} and \eqref{ex:AB:I}, up to some multiplicative constants, are precisely the definitions for the AB fractional derivative of Riemann--Liouville type and the AB fractional integral. Thus we have re-derived the AB model of fractional calculus, including the inversion properties of AB differintegrals, as a special case of our more general methodology.



\end{example}

\begin{remark}
The above discussion illustrates the difference in structure between the AB and Prabhakar models of fractional calculus. Although the fractional derivatives look similar in both, the inversion relation between fractional integrals and derivatives is quite different.
\end{remark}

\section{Transforms and differential equations} \label{sec:transODE}

Fourier and Laplace transforms for our generalised operators could be computed from the definition \eqref{EIdef} using the convolution property. However, to get a usable expression by this means, we would need to know how to transform the function $A$ directly. It is more straightforward to find formulae for the transformed functions using the series formula of Theorem \ref{Eseries:thm}.

\begin{theorem}
\label{Laplace:thm}
Let $a=0$, $b>0$, and $\alpha$, $\beta$, $A$ be as in Definition \ref{E:defn}, and let $f\in L^2[a,b]$ with Laplace transform $\hat{f}$. The function $\prescript{A}{}I_{0+}^{\alpha,\beta}f(t)$ has a Laplace transform given by the following formula:
\begin{equation}
\label{Laplace:eqn}
\widehat{\prescript{A}{}I_{0+}^{\alpha,\beta}f}(s)=s^{-\alpha}A_{\Gamma}(s^{-\beta})\hat{f}(s),
\end{equation}
where the function $A_{\Gamma}$ is given in Definition \ref{AGamma:defn}.
\end{theorem}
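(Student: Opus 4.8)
The plan is to reduce everything to the classical Riemann--Liouville case via the series formula of Theorem \ref{Eseries:thm}, rather than Laplace-transforming the kernel $t^{\alpha-1}A(t^{\beta})$ directly. A direct convolution-theorem computation would require $\int_0^{\infty}t^{\alpha-1}A(t^{\beta})e^{-st}\,\mathrm{d}t$, but $A$ is only guaranteed analytic on $D(0,R)$, so $A(t^{\beta})$ need not be controlled for large $t$; routing through the RL integrals over the finite interval $[0,b]$ sidesteps this. First I would write, by Theorem \ref{Eseries:thm},
\[\prescript{A}{}I_{0+}^{\alpha,\beta}f(t)=\sum_{n=0}^{\infty}a_n\Gamma(\beta n+\alpha)\,\prescript{RL}{}I_{0+}^{\alpha+n\beta}f(t),\]
and then apply the Laplace transform term by term.

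Next I would invoke the classical rule $\widehat{\prescript{RL}{}I_{0+}^{\nu}f}(s)=s^{-\nu}\hat{f}(s)$ for $\mathrm{Re}(\nu)>0$, a standard consequence of the convolution theorem together with $\widehat{t^{\nu-1}/\Gamma(\nu)}(s)=s^{-\nu}$. Applying this with $\nu=\alpha+n\beta$ to each summand gives, formally,
\[\widehat{\prescript{A}{}I_{0+}^{\alpha,\beta}f}(s)=\sum_{n=0}^{\infty}a_n\Gamma(\beta n+\alpha)\,s^{-(\alpha+n\beta)}\hat{f}(s)=s^{-\alpha}\hat{f}(s)\sum_{n=0}^{\infty}a_n\Gamma(\beta n+\alpha)\,(s^{-\beta})^{n}.\]
The inner sum is exactly $A_{\Gamma}(s^{-\beta})$ by Definition \ref{AGamma:defn}, which immediately yields the claimed $s^{-\alpha}A_{\Gamma}(s^{-\beta})\hat{f}(s)$. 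So once the interchange is justified, the identity follows simply by recognising the defining series of $A_{\Gamma}$.

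The main obstacle is therefore justifying the interchange of the infinite sum with the Laplace integral. I would \emph{not} rely on uniform convergence up to the endpoint, since the series of Theorem \ref{Eseries:thm} is only locally uniformly convergent on the open interval and the RL integrals may carry integrable singularities near $t=0$ when $\mathrm{Re}(\alpha+n\beta)<1$. Instead I would show the partial sums converge to $\prescript{A}{}I_{0+}^{\alpha,\beta}f$ in $L^1[0,b]$: the operator-norm estimate from the proof of Theorem \ref{L1:thm} gives $\|\prescript{RL}{}I_{0+}^{\nu}f\|_1\leq\|f\|_1\,b^{\mathrm{Re}(\nu)}/(\mathrm{Re}(\nu)|\Gamma(\nu)|)$, so with $\nu=\alpha+n\beta$ the factors $\Gamma(\beta n+\alpha)$ cancel and the tail is dominated by a convergent majorant of the form (constant)$\cdot\sum_n|a_n|\,b^{n\,\mathrm{Re}(\beta)}$, which converges because $b^{\mathrm{Re}(\beta)}<R$ (here $\mathrm{Re}(\alpha)>0$ keeps the $n=0$ term finite). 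Since, for each fixed $s$, the finite-interval Laplace transform $g\mapsto\int_0^b e^{-st}g(t)\,\mathrm{d}t$ is a bounded linear functional on $L^1[0,b]$ (as $e^{-st}$ is bounded on $[0,b]$), it commutes with this $L^1$-convergent series, legitimising the term-by-term transform. Two subsidiary points I would flag: the region where $|s^{-\beta}|$ lies inside the radius of convergence of $A_{\Gamma}$ is non-empty (it holds for $\mathrm{Re}(s)$ large), and one must adopt the standing convention of extending $f$ by zero beyond $b$ so that the convolution form of the RL transform rule applies cleanly --- this is the one place where the finite versus half-line support of the functions requires care.
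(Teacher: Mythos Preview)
Your approach is essentially identical to the paper's: start from the series formula of Theorem \ref{Eseries:thm}, apply the Laplace transform term by term using the classical rule $\widehat{\prescript{RL}{}I_{0+}^{\nu}f}(s)=s^{-\nu}\hat{f}(s)$, and recognise the resulting power series in $s^{-\beta}$ as $A_{\Gamma}(s^{-\beta})$. The only difference is that the paper justifies the interchange in a single clause (``recalling the uniform convergence of the series there''), whereas you supply a more careful $L^1$-convergence argument and flag the convergence region for $A_{\Gamma}(s^{-\beta})$; your extra rigour is welcome but the underlying strategy is the same.
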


\begin{proof}
We start from the series formula \eqref{Eseries:eqn}, recalling the uniform convergence of the series there:
\begin{align*}
\widehat{\prescript{A}{}I_{0+}^{\alpha,\beta}f}(s)=\sum_{n=0}^{\infty}a_n\Gamma(\beta n+\alpha)\widehat{\prescript{RL}{}I_{0+}^{\alpha+n\beta}f}(s).
\end{align*}
The Laplace transforms of Riemann--Liouville integrals are well-known \cite{miller-ross,samko-kilbas-marichev}, and so we get:
\begin{align*}
\widehat{\prescript{A}{}I_{0+}^{\alpha,\beta}f}(s)&=\sum_{n=0}^{\infty}a_n\Gamma(\beta n+\alpha)s^{-\alpha-n\beta}\hat{f}(s) \\
&=s^{-\alpha}\hat{f}(s)\sum_{n=0}^{\infty}a_n\Gamma(\beta n+\alpha)s^{-n\beta},
\end{align*}
as required.
\end{proof}

\begin{theorem}
\label{Fourier:thm}
Let $a=-\infty$, $b\in\mathbb{R}$, and $\alpha$, $\beta$, $A$ be as in Definition \ref{E:defn}, and let $f\in L^2[a,b]$ with Fourier transform $\tilde{f}$. The function $\prescript{A}{}I_{+}^{\alpha,\beta}f(t)$ has a Fourier transform given by the following formula:
\begin{equation}
\label{Fourier:eqn}
\widetilde{\prescript{A}{}I_{+}^{\alpha,\beta}f}(k)=k^{-\alpha}e^{i\alpha\pi/2}A_{\Gamma}(k^{-\beta}e^{i\beta\pi/2})\tilde{f}(k),
\end{equation}
where the function $A_{\Gamma}$ is given in Definition \ref{AGamma:defn}.
\end{theorem}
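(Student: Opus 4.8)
The plan is to mirror the proof of Theorem \ref{Laplace:thm} exactly, with the Fourier transform in place of the Laplace transform. First I would invoke the locally uniformly convergent series formula \eqref{Eseries:eqn}, writing $\prescript{A}{}I_{+}^{\alpha,\beta}f(t)=\sum_{n=0}^{\infty}a_n\Gamma(\beta n+\alpha)\prescript{RL}{}I_{+}^{\alpha+n\beta}f(t)$, and then apply the Fourier transform term by term. The interchange of the transform with the infinite sum is justified by the uniform convergence of the series together with the $L^2$ hypothesis on $f$, which keeps each summand and the transform itself well-defined and continuous.

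The central ingredient is the known formula for the Fourier transform of the left-sided (Liouville, $a=-\infty$) Riemann--Liouville fractional integral, cited from \cite{samko-kilbas-marichev,miller-ross}: in the sign convention used here one has $\widetilde{\prescript{RL}{}I_{+}^{\nu}f}(k)=(-ik)^{-\nu}\tilde{f}(k)$. For $k>0$ I would rewrite $-i=e^{-i\pi/2}$, so that $(-ik)^{-\nu}=k^{-\nu}e^{i\nu\pi/2}$, and then substitute $\nu=\alpha+n\beta$. This produces the factor $k^{-(\alpha+n\beta)}e^{i(\alpha+n\beta)\pi/2}$ in the $n$th term, and pulling the common $k^{-\alpha}e^{i\alpha\pi/2}\tilde{f}(k)$ outside the sum leaves $\sum_{n=0}^{\infty}a_n\Gamma(\beta n+\alpha)\bigl(k^{-\beta}e^{i\beta\pi/2}\bigr)^n$. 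By Definition \ref{AGamma:defn} this series is precisely $A_{\Gamma}(k^{-\beta}e^{i\beta\pi/2})$, which immediately yields \eqref{Fourier:eqn}.

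The main obstacle is bookkeeping of the complex power functions rather than any deep analysis: I must fix a consistent branch of $z\mapsto z^{-\nu}$ so that the split $(-ik)^{-\nu}=k^{-\nu}e^{i\nu\pi/2}$ is valid uniformly in $n$, and verify that the extra phase rotation by $e^{i\beta\pi/2}$ does not push the argument $k^{-\beta}e^{i\beta\pi/2}$ outside the disc of convergence of $A_{\Gamma}$. Since $\bigl|k^{-\beta}e^{i\beta\pi/2}\bigr|=k^{-\mathrm{Re}(\beta)}\,e^{-\pi\,\mathrm{Im}(\beta)/2}$ decays in $k$, convergence is assured at least for suitably large $k$, which is where I would place the necessary caveat. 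Everything else is the same formal manipulation as in the Laplace case, so the result follows.
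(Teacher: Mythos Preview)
Your proposal is correct and follows essentially the same approach as the paper: apply the series formula \eqref{Eseries:eqn}, transform term by term using the known Fourier transform $\widetilde{\prescript{RL}{}I_{+}^{\nu}f}(k)=(-ik)^{-\nu}\tilde{f}(k)$ from \cite{samko-kilbas-marichev}, and then factor out $(-ik)^{-\alpha}\tilde{f}(k)$ to recognise the remaining sum as $A_{\Gamma}\bigl((-ik)^{-\beta}\bigr)$. Your additional care with the branch of $(-ik)^{-\nu}$ and the convergence region of $A_{\Gamma}$ actually goes beyond what the paper makes explicit.
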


\begin{proof}
We start from the series formula \eqref{Eseries:eqn}, recalling the uniform convergence of the series there:
\begin{align*}
\widetilde{\prescript{A}{}I_{+}^{\alpha,\beta}f}(k)=\sum_{n=0}^{\infty}a_n\Gamma(\beta n+\alpha)\widetilde{\prescript{RL}{}I_{+}^{\alpha+n\beta}f}(k).
\end{align*}
The Fourier transforms of Riemann--Liouville integrals are well-known \cite{samko-kilbas-marichev}, and so we get:
\begin{align*}
\widetilde{\prescript{A}{}I_{+}^{\alpha,\beta}f}(k)&=\sum_{n=0}^{\infty}a_n\Gamma(\beta n+\alpha)(-ik)^{-\alpha-n\beta}\hat{f}(k) \\
&=(-ik)^{-\alpha}\hat{f}(k)\sum_{n=0}^{\infty}a_n\Gamma(\beta n+\alpha)(-ik)^{-n\beta},
\end{align*}
as required.
\end{proof}

Given the above two theorems, we can now attempt to solve some differintegral equations within the framework of the generalised operators. The following result demonstrates a basic example of how this would work.

\begin{theorem}
Let $a=0$, $b>0$, and $\alpha$, $\beta$, $A$ be as in Definition \ref{E:defn}, and let $c\in\mathbb{R}$ and $g\in L^2[a,b]$. The ordinary fractional integral equation 
\begin{equation}
\label{ODE1}
\prescript{A}{}I_{+}^{\alpha,\beta}f(t)+cf(t)=g(t),\quad\quad f(0)=\frac{g(0)}{c},
\end{equation}
has a unique solution $f\in L^2[a,b]$.
\end{theorem}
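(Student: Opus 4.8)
The plan is to read \eqref{ODE1} as a linear operator equation on $L^2[0,b]$ and to invert the relevant operator by a Neumann series. Since the prescribed condition $f(0)=g(0)/c$ only makes sense for $c\neq 0$, I work under that assumption; then the equation reads $\left(\mathrm{Id}+c^{-1}\,\prescript{A}{}I_{0+}^{\alpha,\beta}\right)f=c^{-1}g$, and it suffices to show that $\mathrm{Id}+c^{-1}\,\prescript{A}{}I_{0+}^{\alpha,\beta}$ is boundedly invertible on $L^2[0,b]$.

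First I would record that $\prescript{A}{}I_{0+}^{\alpha,\beta}$ is a bounded operator on $L^2[0,b]$. It is convolution against the kernel $k(u)=u^{\alpha-1}A(u^{\beta})$, which is integrable on $[0,b]$ by the estimate in the proof of Theorem \ref{L1:thm}; Young's convolution inequality then gives $\bigl\|\prescript{A}{}I_{0+}^{\alpha,\beta}f\bigr\|_2\leq\|k\|_1\,\|f\|_2$, so the operator is well-defined and bounded on $L^2$ (which sits inside $L^1$ on the finite interval). The crucial step is to show that $\prescript{A}{}I_{0+}^{\alpha,\beta}$ is quasinilpotent, i.e.\ has spectral radius zero. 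Using $|k(u)|\leq M\,u^{\mathrm{Re}(\alpha)-1}$ on $[0,b]$ with $M=\sup_{|x|<(b-a)^{\beta}}|A(x)|$ as in Theorem \ref{L1:thm}, the $n$-fold convolution kernel $k_n$ of $\bigl(\prescript{A}{}I_{0+}^{\alpha,\beta}\bigr)^n$ is dominated by the $n$-fold convolution of $M\,u^{\mathrm{Re}(\alpha)-1}$, which by the beta-function identity (equivalently the semigroup property of the Riemann--Liouville integral) equals $M^n\frac{\Gamma(\mathrm{Re}(\alpha))^n}{\Gamma(n\,\mathrm{Re}(\alpha))}u^{n\,\mathrm{Re}(\alpha)-1}$. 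Hence $\|k_n\|_1$ decays faster than any geometric sequence, so $\bigl\|\bigl(\prescript{A}{}I_{0+}^{\alpha,\beta}\bigr)^n\bigr\|^{1/n}\to 0$ and the spectral radius is $0$.

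With spectral radius zero, $-c$ lies outside the spectrum of $\prescript{A}{}I_{0+}^{\alpha,\beta}$ for every $c\neq 0$, so $\mathrm{Id}+c^{-1}\,\prescript{A}{}I_{0+}^{\alpha,\beta}$ is invertible and the unique solution is given by the norm-convergent Neumann series $f=c^{-1}\sum_{k=0}^{\infty}(-c^{-1})^{k}\bigl(\prescript{A}{}I_{0+}^{\alpha,\beta}\bigr)^{k}g\in L^2[0,b]$; uniqueness is immediate from invertibility. Finally I would check that the stated initial condition is automatic rather than an extra constraint: evaluating \eqref{ODE1} at $t=0$ gives $\prescript{A}{}I_{0+}^{\alpha,\beta}f(0)=0$, whence $cf(0)=g(0)$.

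As a consistency check one can apply Theorem \ref{Laplace:thm}, which turns \eqref{ODE1} into $\bigl(c+s^{-\alpha}A_{\Gamma}(s^{-\beta})\bigr)\hat f(s)=\hat g(s)$; since $s^{-\alpha}A_{\Gamma}(s^{-\beta})\to 0$ as $\mathrm{Re}(s)\to\infty$, the multiplier $\bigl(c+s^{-\alpha}A_{\Gamma}(s^{-\beta})\bigr)^{-1}$ is bounded in a right half-plane, confirming solvability by transform methods as well. The main obstacle is the quasinilpotency estimate, and in particular the borderline case $\mathrm{Re}(\alpha)=0$, where the kernel fails to smooth; one expects the intended regime to be $\mathrm{Re}(\alpha)>0$, consistent with the other fractional models, and the remaining steps are then routine.
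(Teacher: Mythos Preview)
Your argument is correct and takes a genuinely different route from the paper. The paper's own proof is a brief Laplace-transform computation: it applies Theorem \ref{Laplace:thm} to \eqref{ODE1}, solves algebraically for $\hat f(s)=\hat g(s)\big/\bigl(c+s^{-\alpha}A_{\Gamma}(s^{-\beta})\bigr)$, and then asserts that this expression has a unique inverse Laplace transform in $L^2$. What you present as a ``consistency check'' at the end is in fact the entirety of the paper's proof. By contrast, your main line works directly on $L^2[0,b]$: you bound the convolution operator via Young's inequality, establish quasinilpotency through the explicit Gamma-function decay of the iterated kernels, and then invert $\mathrm{Id}+c^{-1}\,\prescript{A}{}I_{0+}^{\alpha,\beta}$ by a norm-convergent Neumann series. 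Your approach buys a self-contained and more rigorous argument that actually delivers the $L^2$ solution with an explicit series representation, and it makes transparent exactly where the hypothesis $\mathrm{Re}(\alpha)>0$ is used; the paper's transform argument is shorter but leaves the existence of the inverse transform and its membership in $L^2[0,b]$ unjustified.
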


\begin{proof}
We apply Laplace transforms to the equation \eqref{ODE1} and use the result of Theorem \ref{Laplace:thm}:
\begin{align*}
\hat{g}(s)&=\widehat{\prescript{A}{}I_{+}^{\alpha,\beta}f}(s)+c\hat{f}(s) \\
&=s^{-\alpha}A_{\Gamma}(s^{-\beta})\hat{f}(s)+c\hat{f}(s).
\end{align*}
So we have an explicit expression for the Laplace transform of $f$, namely: \[\hat{f}(s)=\frac{\hat{g}(s)}{s^{-\alpha}A_{\Gamma}(s^{-\beta})+c}.\] This has a unique inverse Laplace transform, giving a unique solution function $f$. We assume the initial condition which is necessary for the ODE itself to be consistent at $t=0$.
\end{proof}

\section{Leibniz rule and chain rule} \label{sec:prodchain}

Two fundamental results in any first course on calculus are the Leibniz rule (or product rule) and the chain rule. Analogues of these results in fractional calculus are well known in the Riemann--Liouville model \cite{miller-ross,podlubny,osler1} and have also been developed in other contexts such as the AB and Prabhakar models \cite{baleanu-fernandez,fernandez-baleanu-srivastava}. It turns out that our proposed model, despite its generality, is still sufficiently close to the classical fractional models that we can prove such results in this context too.




\begin{theorem}[Generalised Leibniz rule]
\label{Leibniz:thm}
If $f\in C[a,b]$ and $g\in C^{\infty}[a,b]$, then for any $\alpha,\beta\in\mathbb{C}$ with non-negative real parts, we have:
\begin{equation}
\label{Leibniz:eqn}
\prescript{A}{}I^{\alpha,\beta}_{a+}\big(f(t)g(t)\big)=\sum_{m=0}^{\infty}\frac{\mathrm{d}^mg}{\mathrm{d}t^m}(t)\sum_{n=0}^{\infty}a_n\Gamma(\beta n+\alpha)\binom{-\alpha-n\beta}{m}\prescript{RL}{}D^{-\alpha-n\beta-m}_{a+}f(t).
\end{equation}
\end{theorem}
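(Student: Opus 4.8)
The plan is to use the series formula of Theorem~\ref{Eseries:thm} to reduce the claim to the classical Riemann--Liouville Leibniz rule, exactly as the series formula is used throughout this paper to transfer facts from the classical calculus to the operator $\prescript{A}{}I_{a+}^{\alpha,\beta}$. Applying \eqref{Eseries:eqn} with the product $fg$ in place of $f$ gives at once
\[
\prescript{A}{}I^{\alpha,\beta}_{a+}\big(f(t)g(t)\big)=\sum_{n=0}^{\infty}a_n\Gamma(\beta n+\alpha)\,\prescript{RL}{}I^{\alpha+n\beta}_{a+}\big(f(t)g(t)\big),
\]
and since $\prescript{RL}{}I^{\nu}_{a+}=\prescript{RL}{}D^{-\nu}_{a+}$, each summand is an ordinary Riemann--Liouville differintegral of the product $fg$, now of order $-(\alpha+n\beta)$.

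Next I would invoke the classical generalised Leibniz rule for Riemann--Liouville differintegrals \cite{miller-ross,podlubny,osler1}, applied with order $\nu=-\alpha-n\beta$, to expand each term:
\[
\prescript{RL}{}D^{-\alpha-n\beta}_{a+}\big(fg\big)(t)=\sum_{m=0}^{\infty}\binom{-\alpha-n\beta}{m}\frac{\mathrm{d}^mg}{\mathrm{d}t^m}(t)\,\prescript{RL}{}D^{-\alpha-n\beta-m}_{a+}f(t).
\]
Here the hypotheses $f\in C[a,b]$ and $g\in C^{\infty}[a,b]$ are what guarantee, respectively, that the fractional differintegrals of $f$ are well defined and that all integer derivatives of the smooth factor $g$ exist. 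Substituting this expansion into the previous display produces a double series indexed by $m$ and $n$; reindexing so that the summation over $m$ is taken outside then yields precisely the right-hand side of \eqref{Leibniz:eqn}.

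The main obstacle is the rigorous justification of swapping the two infinite summations, for which I would establish absolute and locally uniform (in $t$) convergence of the double series and then appeal to Fubini's theorem for sums. The crucial simplification is the gamma-function identity $\Gamma(\beta n+\alpha)\binom{-\alpha-n\beta}{m}=(-1)^m\Gamma(\alpha+n\beta+m)/m!$, together with the elementary bound $\big|\prescript{RL}{}I^{s}_{a+}f(t)\big|\leq\|f\|_{\infty}(b-a)^{\mathrm{Re}(s)}\big/\big(|\Gamma(s)|\,\mathrm{Re}(s)\big)$ for $\mathrm{Re}(s)>0$ (of the type already used in the proof of Theorem~\ref{L1:thm}) applied with $s=\alpha+n\beta+m$. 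Since this bound contains $1/|\Gamma(s)|$, the factor $\Gamma(\alpha+n\beta+m)$ cancels, and a majorant for the double series factorises as a constant times $\big(\sum_{n}|a_n|(b-a)^{n\,\mathrm{Re}(\beta)}\big)\big(\sum_{m}\|\mathrm{d}^mg/\mathrm{d}t^m\|_{\infty}(b-a)^{m}/m!\big)$. The first factor converges because $A$ is analytic on $D(0,R)$ with $R>(b-a)^{\mathrm{Re}(\beta)}$, exactly as in Definition~\ref{AGamma:defn}; the second factor is where the regularity of $g$ is genuinely used, and confirming that the smoothness hypothesis furnishes the required decay of $\|\mathrm{d}^mg/\mathrm{d}t^m\|_{\infty}(b-a)^m/m!$ (an analyticity-type Taylor bound) is the delicate point of the argument.
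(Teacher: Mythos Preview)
Your overall strategy --- apply the series formula \eqref{Eseries:eqn} and then the classical Riemann--Liouville Leibniz rule term by term --- is exactly the paper's approach. The difference, and the genuine gap in your version, lies in how you justify the interchange of the two infinite summations.

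You propose a Fubini argument via an absolute majorant that factorises as $\big(\sum_n|a_n|(b-a)^{n\,\mathrm{Re}(\beta)}\big)\big(\sum_m\|g^{(m)}\|_\infty(b-a)^m/m!\big)$. The first factor is fine, but the second need not converge: for a general $g\in C^\infty[a,b]$ there is no control on the growth of $\|g^{(m)}\|_\infty$, and the bound you have written is essentially an analyticity condition on $g$, strictly stronger than the stated hypothesis. You correctly flag this as ``the delicate point'', but it is in fact where the argument breaks, not merely where it becomes delicate.

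The paper sidesteps this by using the \emph{finite} Leibniz expansion with remainder, \cite[Eq.~(2.199)]{podlubny}, rather than the infinite-series form. Substituting into \eqref{Eseries:eqn} gives a finite double sum (whose $m$- and $n$-sums can be swapped trivially) plus the remainder series $\sum_{n\geq0} a_n\Gamma(\beta n+\alpha)R_{N,-\alpha-n\beta}(t)$. The key observation is that the explicit formula for $R_{N,-\alpha-n\beta}$ carries a factor $1/\Gamma(\alpha+n\beta)$ which cancels the $\Gamma(\beta n+\alpha)$ coming from the series; summing over $n$ then simply reconstitutes $A\big((t-a)^\beta\big)$ and leaves a single expression of exactly the classical-remainder form. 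Its vanishing as $N\to\infty$ is then literally the argument of \cite{podlubny} for one Riemann--Liouville differintegral, with no separate absolute-convergence estimate over $m$ ever needed. This reduction is what your direct Fubini route cannot reproduce.
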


\begin{proof}
Our starting point is the following Leibniz rule analogue for Riemann--Liouville differintegrals, which is proved in \cite[Eq. (2.199)]{podlubny}:
\begin{equation}
\label{Leibniz:RL}
\prescript{RL}{}D^{\alpha}_{a+}\big(f(t)g(t)\big)=\sum_{m=0}^N\binom{\alpha}{m}\prescript{RL}{}D^{\alpha-m}_{a+}f(t)\prescript{RL}{}D^{m}g(t)-R_{N,\alpha}(t),
\end{equation}
where $N\geq\mathrm{Re}(\alpha)+1$ and the remainder term is defined by \[R_{N,\alpha}(t)\coloneqq\frac{1}{N!\Gamma(-\alpha)}\int_a^t(t-\tau)^{-\alpha-1}f(\tau)\int_{\tau}^t\prescript{RL}{}D^{N+1}g(u)(t-u)^N\,\mathrm{d}u\,\mathrm{d}\tau.\] In \cite{podlubny} it is shown that \[\lim_{N\rightarrow\infty}R_{N,\alpha}(t)=0,\] and related results are established in \cite{baleanu-fernandez,fernandez-baleanu-srivastava} to prove the convergence of the relevant infinite series. Here, we put together the Leibniz rule \eqref{Leibniz:RL} with the series formula \eqref{Eseries:eqn} for the generalised operators:
\begin{align*}
\prescript{A}{}I^{\alpha,\beta}_{a+}\big(f(t)g(t)\big)&=\sum_{n=0}^{\infty}a_n\Gamma(\beta n+\alpha)\prescript{RL}{}D_{a+}^{-\alpha-n\beta}\big(f(t)g(t)\big) \\
&=\sum_{n=0}^{\infty}a_n\Gamma(\beta n+\alpha)\left[\sum_{m=0}^N\binom{-\alpha-n\beta}{m}\prescript{RL}{}D^{-\alpha-n\beta-m}_{a+}f(t)\prescript{RL}{}D^{m}g(t)-R_{N,-\alpha-n\beta}(t)\right] \\
\begin{split}
&=\sum_{n=0}^{\infty}\sum_{m=0}^Na_n\Gamma(\beta n+\alpha)\binom{-\alpha-n\beta}{m}\prescript{RL}{}D^{-\alpha-n\beta-m}_{a+}f(t)\prescript{RL}{}D^{m}g(t) \\ &\hspace{7cm}-\sum_{n=0}^{\infty}a_n\Gamma(\beta n+\alpha)R_{N,-\alpha-n\beta}(t).
\end{split}
\end{align*}
We know by Theorem \ref{Eseries:thm} that the summation over $n$ is locally uniformly convergent. So the summations over $m$ and $n$ can be swapped, and it remains to prove that \[\lim_{N\rightarrow\infty}\sum_{n=0}^{\infty}a_n\Gamma(\beta n+\alpha)R_{N,-\alpha-n\beta}(t)=0.\] By a change of variables in the double integral as in \cite[Eq. (2.201)]{podlubny}, we find the following expression for $R$, which was also written in \cite{fernandez-baleanu-srivastava}:
\[R_{N,-\alpha-n\beta}(t)=\frac{(-1)^N(t-a)^{N+\alpha+n\beta+1}}{N!\Gamma(\alpha+n\beta)}\int_0^1\int_0^1f\big(a+p(t-a)\big)\prescript{RL}{}D^{N+1}g\big(a+(p+q-pq)(t-a)\big)\,\mathrm{d}p\,\mathrm{d}q.\] We insert this formula into the series for which we need to find the limit, and note that the gamma functions cancel precisely with each other:
\begin{multline*}
\sum_{n=0}^{\infty}a_n\Gamma(\beta n+\alpha)R_{N,-\alpha-n\beta}(t) \\ =\frac{a_n(-1)^N}{N!}(t-a)^{N+\alpha+1}A\left((t-a)^{\beta}\right)\int_0^1\int_0^1f\big(a+p(t-a)\big)\prescript{RL}{}D^{N+1}g\big(a+(p+q-pq)(t-a)\big)\,\mathrm{d}p\,\mathrm{d}q.
\end{multline*}
And this expression tends to zero as $N\rightarrow\infty$, by the same argument as in \cite{podlubny}. Thus, the proof is complete.
\end{proof}

\begin{example}
To illustrate our results, we apply the above Theorem \ref{Leibniz:thm} to the function $te^{kt}$. Taking $f(t)=e^{kt}$ and $g(t)=t$ and $a=-\infty$ in the general identity \eqref{Leibniz:eqn} yields the following:
\begin{align*}
\prescript{A}{}I^{\alpha,\beta}_{a+}\big(te^{kt}\big)&=\sum_{m=0}^{1}\frac{\mathrm{d}^m}{\mathrm{d}t^m}(t)\sum_{n=0}^{\infty}a_n\Gamma(\beta n+\alpha)\binom{-\alpha-n\beta}{m}\prescript{RL}{}D^{-\alpha-n\beta-m}_{+}(e^{kt}) \\
&=\sum_{m=0}^{1}t^{1-m}\sum_{n=0}^{\infty}a_n\Gamma(\alpha+n\beta)\binom{-\alpha-n\beta}{m}k^{\alpha+n\beta+m}e^{kt} \\
&=t\sum_{n=0}^{\infty}a_n\Gamma(\alpha+n\beta)k^{\alpha+n\beta}e^{kt}+\sum_{n=0}^{\infty}a_n\Gamma(\alpha+n\beta)(-\alpha-n\beta)k^{\alpha+n\beta+1}e^{kt} \\
&=\sum_{n=0}^{\infty}a_n\Gamma(\alpha+n\beta)k^{\alpha+n\beta}e^{kt}\big[t-k(\alpha+n\beta)\big].
\end{align*}
Thus, we have an explicit series expression for the fractional differintegral of the function $te^{kt}$, for any $k\in\mathbb{C}$ and in any model of fractional calculus which fits into the general framework we have established.
\end{example}

\begin{theorem}[Generalised chain rule]
\label{chain:thm}
If $f,g\in C^{\infty}[a,b]$ and $\alpha,\beta\in\mathbb{C}$ with non-negative real parts, then
\begin{equation}
\label{chain:eqn}
\prescript{A}{}I^{\alpha,\beta}_{a+}\big(f(g(t))\big)=\sum_{m=0}^{\infty}\sum_{n=0}^{\infty}\frac{a_n(-1)^mt^{\alpha+n\beta+m}}{m!(\alpha+n\beta+m)}\sum_{r=1}^m\frac{\mathrm{d}^rf(g(t))}{\mathrm{d}g(t)^r}\sum_{(r_1,\dots,r_m)}\Bigg[\prod_{j=1}^m\tfrac{j}{r_j!(j!)^{r_j}}\Big(\tfrac{\mathrm{d}^jg(t)}{\mathrm{d}x^j}\Big)^{r_j}\Bigg],
\end{equation}
where for any fixed $m$ and $r$ with $1\leq r\leq m$, the summation over $(r_1,\dots,r_m)$ is over all such $m$-tuples which satisfy $\sum_jr_j=r$ and $\sum_jjr_j=m$.
\end{theorem}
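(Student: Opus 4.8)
The plan is to combine the series formula of Theorem \ref{Eseries:thm} with a Taylor expansion of the composite function about the upper endpoint of integration, and then to recognise the classical Fa\`a di Bruno formula inside the resulting coefficients. First I would use the power series \eqref{A:entire} for $A$ to write, for each fixed $n$, the un-normalised building block
\[
a_n\Gamma(\beta n+\alpha)\,\prescript{RL}{}I_{a+}^{\alpha+n\beta}\big(f(g(t))\big)=a_n\int_a^t(t-\tau)^{\alpha+n\beta-1}f(g(\tau))\,\mathrm{d}\tau,
\]
the gamma factor cancelling against the normalisation in \eqref{RLdef:int}; summing over $n$ recovers $\prescript{A}{}I_{a+}^{\alpha,\beta}(f(g(t)))$ by Theorem \ref{Eseries:thm}. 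This reduces the problem to computing each integral on the right.

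For this I would Taylor-expand $h(\tau)\coloneqq f(g(\tau))$ about the point $\tau=t$, writing $h(\tau)=\sum_{m=0}^{\infty}\frac{1}{m!}\frac{\mathrm{d}^mh}{\mathrm{d}t^m}(t)\,(\tau-t)^m$ and using $(\tau-t)^m=(-1)^m(t-\tau)^m$. Interchanging summation and integration then leaves only the elementary integrals
\[
\int_a^t(t-\tau)^{\alpha+n\beta+m-1}\,\mathrm{d}\tau=\frac{(t-a)^{\alpha+n\beta+m}}{\alpha+n\beta+m},
\]
which (taking $a=0$, as the stated form $t^{\alpha+n\beta+m}$ requires) produces exactly the factor $\frac{(-1)^m t^{\alpha+n\beta+m}}{m!(\alpha+n\beta+m)}$. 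At this intermediate stage one obtains the compact double series
\[
\prescript{A}{}I_{a+}^{\alpha,\beta}\big(f(g(t))\big)=\sum_{m=0}^{\infty}\sum_{n=0}^{\infty}\frac{a_n(-1)^m t^{\alpha+n\beta+m}}{m!(\alpha+n\beta+m)}\,\frac{\mathrm{d}^m}{\mathrm{d}t^m}f(g(t)).
\]
The final step is to insert the classical Fa\`a di Bruno formula for $\frac{\mathrm{d}^m}{\mathrm{d}t^m}f(g(t))$; since $\prod_{j=1}^m j=m!$, the coefficient $\prod_{j=1}^m\frac{j}{r_j!(j!)^{r_j}}$ appearing in \eqref{chain:eqn} is precisely the Fa\`a di Bruno coefficient $\frac{m!}{\prod_j r_j!(j!)^{r_j}}$, so the inner sum over $r$ and over the partitions $(r_1,\dots,r_m)$ with $\sum_j r_j=r$ and $\sum_j jr_j=m$ reassembles exactly $\frac{\mathrm{d}^m}{\mathrm{d}t^m}f(g(t))$, and the stated identity follows.

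The main obstacle will be justifying the term-by-term integration of the Taylor expansion: since $f$ and $g$ are only assumed to be $C^\infty$ (not analytic), the Taylor series of $h$ about $\tau=t$ need not converge to $h$, so a purely formal interchange is not rigorous. I would address this exactly as in the proof of the Leibniz rule (Theorem \ref{Leibniz:thm}): expand $h$ to finite order $N$ with an explicit integral remainder, carry the remainder through the summation over $n$ using the local uniform convergence guaranteed by Theorem \ref{Eseries:thm}, and show that the accumulated remainder tends to $0$ as $N\to\infty$ by the same type of estimate used there. A secondary technical point is the interchange of the $m$- and $n$-summations in passing to \eqref{chain:eqn}, which is again licensed by the local uniform convergence of the $n$-series supplied by Theorem \ref{Eseries:thm}.
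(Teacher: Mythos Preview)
Your argument is correct and lands on exactly the same intermediate double series
\[
\prescript{A}{}I^{\alpha,\beta}_{a+}\big(f(g(t))\big)=\sum_{m=0}^{\infty}\sum_{n=0}^{\infty}\frac{a_n(-1)^m t^{\alpha+n\beta+m}}{m!(\alpha+n\beta+m)}\,\frac{\mathrm{d}^m}{\mathrm{d}t^m}f(g(t))
\]
that the paper obtains, followed by the same Fa\`a di Bruno substitution. The route to that intermediate series is different, however. The paper does not Taylor-expand $f(g(\tau))$ directly; instead it invokes the already-proved Leibniz rule (Theorem~\ref{Leibniz:thm}) with the special choice of factors $\mathbbm{1}(t)$ and $f(g(t))$, then evaluates $\prescript{RL}{}I_{a+}^{\alpha+n\beta+m}\mathbbm{1}(t)$ explicitly and simplifies the resulting binomial and gamma factors via the reflection formula to reach the coefficient $\frac{(-1)^m}{m!(\alpha+n\beta+m)}$. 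This buys the paper all the convergence and remainder analysis for free, since that work was done once and for all in the Leibniz proof; your approach is more direct and avoids the gamma-function gymnastics, but at the cost of re-proving the remainder estimate (which, as you correctly note, is essentially the same estimate). Your observation that the stated form with $t^{\alpha+n\beta+m}$ forces $a=0$ applies equally to the paper's computation of $\prescript{RL}{}D^{-\alpha-n\beta-m}_{a+}\mathbbm{1}(t)$.
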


\begin{proof}
We use the result of the previous theorem. Substituting $\mathbbm{1}(t)=1$ instead of $f(t)$ and $f(g(t))$ instead of $g(t)$ in \eqref{Leibniz:eqn}, we find:
\begin{align*}
\prescript{A}{}I^{\alpha,\beta}_{a+}\big(f(g(t))\big)&=\sum_{m=0}^{\infty}\frac{\mathrm{d}^m}{\mathrm{d}t^m}f(g(t))\sum_{n=0}^{\infty}a_n\Gamma(\beta n+\alpha)\binom{-\alpha-n\beta}{m}\prescript{RL}{}D^{-\alpha-n\beta-m}_{a+}\mathbbm{1}(t) \\
&=\sum_{m=0}^{\infty}\frac{\mathrm{d}^m}{\mathrm{d}t^m}f(g(t))\sum_{n=0}^{\infty}a_n\Gamma(\beta n+\alpha)\binom{-\alpha-n\beta}{m}\frac{t^{\alpha+n\beta+m}}{\Gamma(\beta n+\alpha+m+1)} \\
&=\sum_{m=0}^{\infty}\frac{\mathrm{d}^m}{\mathrm{d}t^m}f(g(t))\sum_{n=0}^{\infty}a_n\Gamma(\beta n+\alpha)\frac{\Gamma(1-\alpha-n\beta)}{m!\Gamma(1-\alpha-n\beta-m)}\cdot\frac{t^{\alpha+n\beta+m}}{\Gamma(\beta n+\alpha+m+1)} \\
&=\sum_{m=0}^{\infty}\frac{\mathrm{d}^m}{\mathrm{d}t^m}f(g(t))\sum_{n=0}^{\infty}\frac{a_n\sin(\pi(\alpha+n\beta+m))t^{\alpha+n\beta+m}}{m!(\alpha+n\beta+m)\sin(\pi(\alpha+n\beta))} \\
&=\sum_{m=0}^{\infty}\frac{\mathrm{d}^m}{\mathrm{d}t^m}f(g(t))\sum_{n=0}^{\infty}\frac{a_n(-1)^mt^{\alpha+n\beta+m}}{m!(\alpha+n\beta+m)},
\end{align*}
where we have used the reflection formula $\Gamma(z)\Gamma(1-z)=\frac{\pi}{\sin\pi z}$ for the gamma function. We note that once again all the gamma functions cancel out precisely with each other.

Now the classical Fa\`a di Bruno formula can be applied to the function $\frac{\mathrm{d}^m}{\mathrm{d}t^m}f(g(t))$: we know that
\begin{equation}
\label{FaadiBruno}
\frac{\mathrm{d}^m}{\mathrm{d}t^m}f(g(t))=\sum_{r=1}^m\frac{\mathrm{d}^rf(g(t))}{\mathrm{d}g(t)^r}\sum_{(r_1,\dots,r_m)}\Bigg[\prod_{j=1}^m\tfrac{j}{r_j!(j!)^{r_j}}\Big(\tfrac{\mathrm{d}^jg(t)}{\mathrm{d}x^j}\Big)^{r_j}\Bigg]
\end{equation}
where the summation over $(r_1,\dots,r_m)$ is over the set \[\Big\{(r_1,\dots,r_m)\in\left(\mathbb{Z}^+_0\right)^m:\sum_jr_j=r,\sum_jjr_j=m\Big\}.\] Now the result follows by substituting \eqref{FaadiBruno} into our expression for $\prescript{A}{}I^{\alpha,\beta}_{a+}\big(f(g(t))\big)$.
\end{proof}

Theorems \ref{Leibniz:thm} and \ref{chain:thm} can be used to compute the application of our generalised operators to a wide range of functions which can be generated from elementary ones (e.g. power and exponential functions) by multiplication and composition.

\section{The solution of a Cauchy problem using Volterra integral equations} \label{sec:CauchyVolterra}

In this section, we shall consider a generalised ordinary differintegral equation of the following form:
\begin{equation}
\label{Cauchy:eqn}
\prescript{RL}{}D_{a+}^{\gamma}u(t)=\prescript{A}{}I_{a+}^{\alpha,\beta}f(t,u(t)),
\end{equation}
with some appropriate initial conditions to be specified later. Note that the expression $f(t,u(t))$ is a function of $t$, and therefore our generalised operator can be applied in \eqref{Cauchy:eqn} with respect to $t$.

We first state the following equivalence between the Cauchy problem defined by the differintegral equation \eqref{Cauchy:eqn} and a Volterra integral equation, the latter of which we shall then proceed to solve. 

\begin{lemma}
\label{equiv}
Let $a,b,A$ be as in Definition \ref{E:defn}, and fix $\alpha,\beta,\gamma\in\mathbb{C}$ with non-negative real parts. Define $n=\lceil\gamma\rceil$ and let $C_1,\dots,C_n$ be complex constants. Assume the functions $u:[a,b]\rightarrow\mathbb{R}$ and $f:[a,b]\times\mathbb{R}\rightarrow\mathbb{R}$ are such that $u(t)$ and $f(t,u(t))$ are both in $L^1[a,b]$. Then solving the Cauchy-type problem
\begin{align}
\label{equiv:Cauchy1}
\prescript{RL}{}D_{a+}^{\gamma}u(t)=\prescript{A}{}I_{a+}^{\alpha,\beta}f(t,u(t)),\quad\quad t\in[a,b]; \\
\label{equiv:Cauchy2}
\lim_{t\rightarrow a^+}\left(\prescript{RL}{}D_{a+}^{\gamma-k}u(t)\right)=C_k,\quad\quad k=1,2,\dots,n
\end{align}
for $u\in L^1[a,b]$ is precisely equivalent to solving the Volterra integral equation
\begin{equation}
\label{equiv:Volterra}
u(t)=\sum_{k=1}^n\frac{C_k(t-a)^{\gamma-k}}{\Gamma(\gamma-k+1)}+\prescript{A}{}I_{a+}^{\alpha+\gamma,\beta}f(t,u(t)),\quad\quad t\in[a,b].
\end{equation}
\end{lemma}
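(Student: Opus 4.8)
The plan is to establish the equivalence by applying Riemann--Liouville operators to pass between the two formulations, relying throughout on the composition identity of Theorem \ref{RL:E:thm} to absorb the generalised operator $\prescript{A}{}I_{a+}^{\alpha,\beta}$ into ordinary RL integrals.

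For the forward direction (Cauchy problem $\Rightarrow$ Volterra equation), I would apply the Riemann--Liouville integral $\prescript{RL}{}I_{a+}^{\gamma}$ to both sides of \eqref{equiv:Cauchy1}. On the left-hand side I invoke the standard fundamental composition formula for RL operators \cite{samko-kilbas-marichev,kilbas-srivastava-trujillo},
\[
\prescript{RL}{}I_{a+}^{\gamma}\prescript{RL}{}D_{a+}^{\gamma}u(t)=u(t)-\sum_{k=1}^n\frac{(t-a)^{\gamma-k}}{\Gamma(\gamma-k+1)}\lim_{t\rightarrow a^+}\left(\prescript{RL}{}D_{a+}^{\gamma-k}u(t)\right),
\]
and substitute the initial conditions \eqref{equiv:Cauchy2} to replace each limit by $C_k$. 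On the right-hand side I use Theorem \ref{RL:E:thm} to collapse $\prescript{RL}{}I_{a+}^{\gamma}\circ\prescript{A}{}I_{a+}^{\alpha,\beta}$ into $\prescript{A}{}I_{a+}^{\alpha+\gamma,\beta}$. Rearranging then yields \eqref{equiv:Volterra} immediately.

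For the reverse direction (Volterra equation $\Rightarrow$ Cauchy problem) I would carry out two checks. First, applying $\prescript{RL}{}D_{a+}^{\gamma}$ to \eqref{equiv:Volterra}: the power-function terms $(t-a)^{\gamma-k}$ lie in the kernel of $\prescript{RL}{}D_{a+}^{\gamma}$ and vanish, while the remaining term gives $\prescript{RL}{}D_{a+}^{\gamma}\prescript{A}{}I_{a+}^{\alpha+\gamma,\beta}f=\prescript{RL}{}D_{a+}^{\gamma}\prescript{RL}{}I_{a+}^{\gamma}\prescript{A}{}I_{a+}^{\alpha,\beta}f=\prescript{A}{}I_{a+}^{\alpha,\beta}f$, using Theorem \ref{RL:E:thm} followed by the left-inverse property $\prescript{RL}{}D_{a+}^{\gamma}\prescript{RL}{}I_{a+}^{\gamma}=\mathrm{id}$, thereby recovering \eqref{equiv:Cauchy1}. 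Second, to recover the initial conditions I apply $\prescript{RL}{}D_{a+}^{\gamma-k}$ to \eqref{equiv:Volterra} and let $t\rightarrow a^+$. Using $\prescript{RL}{}D_{a+}^{\gamma-k}(t-a)^{\gamma-j}=\frac{\Gamma(\gamma-j+1)}{\Gamma(k-j+1)}(t-a)^{k-j}$, only the $j=k$ term survives: the coefficient vanishes when $j>k$ because $\Gamma(k-j+1)$ has a pole at the non-positive integer $k-j+1$, and the power tends to zero when $j<k$, so the surviving term contributes exactly $C_k$. Meanwhile the generalised-operator term, written via the series formula \eqref{Eseries:eqn} as a sum of RL integrals of strictly positive order $\alpha+k+n\beta$, tends to zero as $t\rightarrow a^+$.

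I expect the main obstacle to be the careful bookkeeping in this reverse direction: justifying that the generalised-operator term contributes nothing to each initial condition requires term-by-term differentiation of the series for $\prescript{A}{}I_{a+}^{\alpha+\gamma,\beta}f$ (legitimate by the local uniform convergence guaranteed by Theorem \ref{Eseries:thm}) together with the vanishing of each $\prescript{RL}{}I_{a+}^{\alpha+k+n\beta}f(t)$ as $t\rightarrow a^+$, which relies on the positivity of the real parts of $\alpha$ and $\beta$ and the $L^1$ hypothesis on $f(t,u(t))$. The forward direction is essentially immediate once the fundamental RL composition formula is in hand, so the technical weight of the argument sits entirely in verifying consistency of the initial data.
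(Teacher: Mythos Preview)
Your proposal is correct and follows essentially the same route as the paper. The only difference is packaging: the paper observes that the right-hand side $\prescript{A}{}I_{a+}^{\alpha,\beta}f(t,u(t))$ lies in $L^1[a,b]$ by Theorem~\ref{L1:thm}, then invokes the Kilbas--Bonilla--Trujillo equivalence theorem \cite{kilbas-bonilla-trujillo} as a black box to pass between the Cauchy problem and the Volterra equation $u=\sum_k\frac{C_k(t-a)^{\gamma-k}}{\Gamma(\gamma-k+1)}+\prescript{RL}{}I_{a+}^{\gamma}\big(\prescript{A}{}I_{a+}^{\alpha,\beta}f\big)$, and finally applies Theorem~\ref{RL:E:thm} to collapse the composition. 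You are simply unpacking that cited equivalence by hand---applying $\prescript{RL}{}I_{a+}^{\gamma}$ in one direction and $\prescript{RL}{}D_{a+}^{\gamma}$, $\prescript{RL}{}D_{a+}^{\gamma-k}$ in the other---and you land on the same use of Theorem~\ref{RL:E:thm} at the key step. Your more explicit treatment buys self-containment; the paper's citation buys brevity and sidesteps the bookkeeping you flagged in the reverse direction.
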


\begin{proof}
The underlying fact here is the result of Theorem 1 in \cite{kilbas-bonilla-trujillo}, which was also used to solve a similar but more specific problem in \cite[Lemma 4]{kilbas-saigo-saxena2}.

We first use our Theorem \ref{L1:thm} to note that, since $f(t,u(t))$ is an $L^1$ function by assumption, so too is the right-hand side of the equation \ref{equiv:Cauchy1}. Then by \cite[Theorem 1]{kilbas-bonilla-trujillo}, the solution of the Cauchy problem given by \eqref{equiv:Cauchy1} and \eqref{equiv:Cauchy2} is precisely equivalent to the solution of the Volterra equation
\[u(t)=\sum_{k=1}^n\frac{C_k(t-a)^{\gamma-k}}{\Gamma(\gamma-k+1)}+\frac{1}{\Gamma(\gamma)}\int_a^t(t-\tau)^{\gamma-1}\prescript{A}{}I_{a+}^{\alpha,\beta}f(\tau,u(\tau))\,\mathrm{d}\tau.\]
By our Theorem \ref{RL:E:thm}, the result follows.
\end{proof}

Using the equivalence of the Volterra integral equation, we can now prove that the original Cauchy problem has a unique solution, as follows.

\begin{theorem}
\label{Volterra:thm}
With all notation as in Lemma \ref{equiv}, and assuming that the function $f$ satisfies the following Lipschitz condition in the second variable:
\[|f(x,y_1)-f(x,y_2)|<C|y_1-y_2|,\]
the Volterra integral equation \eqref{equiv:Volterra} has a unique solution $y\in L^1[a,b]$.
\end{theorem}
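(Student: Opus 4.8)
The plan is to recast the Volterra equation \eqref{equiv:Volterra} as a fixed-point problem and apply a Banach-type fixed-point argument in $L^1[a,b]$. I would define the operator $T$ on $L^1[a,b]$ by
\[Tu(t)\coloneqq\sum_{k=1}^n\frac{C_k(t-a)^{\gamma-k}}{\Gamma(\gamma-k+1)}+\prescript{A}{}I_{a+}^{\alpha+\gamma,\beta}f(t,u(t)),\]
so that the solutions of \eqref{equiv:Volterra} are exactly the fixed points of $T$. First I would check that $T$ maps $L^1[a,b]$ into itself: each power function $(t-a)^{\gamma-k}$ has exponent with real part exceeding $-1$ (since $n=\lceil\gamma\rceil$ forces $\mathrm{Re}(\gamma-k)>-1$ for all $k\le n$) and so lies in $L^1[a,b]$, while the Lipschitz hypothesis gives $|f(t,u(t))|\le|f(t,0)|+C|u(t)|$, whence $f(\cdot,u)\in L^1[a,b]$ whenever $u\in L^1[a,b]$, and then Theorem \ref{L1:thm} guarantees $\prescript{A}{}I_{a+}^{\alpha+\gamma,\beta}f(\cdot,u)\in L^1[a,b]$.

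Next I would establish a contraction estimate for the iterates of $T$. Since the constant terms cancel in a difference, for $u_1,u_2\in L^1[a,b]$ we have
\[Tu_1(t)-Tu_2(t)=\prescript{A}{}I_{a+}^{\alpha+\gamma,\beta}\big(f(\cdot,u_1)-f(\cdot,u_2)\big)(t).\]
Bounding the kernel by its majorant $\big|(t-\tau)^{\alpha+\gamma-1}A((t-\tau)^{\beta})\big|\le M(t-\tau)^{\sigma-1}$, where $\sigma\coloneqq\mathrm{Re}(\alpha+\gamma)$ and $M\coloneqq\sup_{|x|<(b-a)^{\mathrm{Re}(\beta)}}|A(x)|$, and applying the Lipschitz condition, I obtain the pointwise bound
\[|Tu_1(t)-Tu_2(t)|\le CM\int_a^t(t-\tau)^{\sigma-1}|u_1(\tau)-u_2(\tau)|\,\mathrm{d}\tau.\]
The crucial observation is that this majorant kernel is a constant multiple of the Riemann--Liouville kernel of order $\sigma$, so iterating $N$ times and invoking the semigroup property of RL integrals (as used in Theorem \ref{RL:E:thm}) collapses the $N$-fold convolution into a single one:
\[|T^Nu_1(t)-T^Nu_2(t)|\le\frac{(CM\Gamma(\sigma))^N}{\Gamma(N\sigma)}\int_a^t(t-\tau)^{N\sigma-1}|u_1(\tau)-u_2(\tau)|\,\mathrm{d}\tau.\]

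Finally, taking $L^1$ norms and applying the Fubini rearrangement used in the proof of Theorem \ref{L1:thm}, I would arrive at
\[\big\|T^Nu_1-T^Nu_2\big\|_1\le\frac{\big(CM\Gamma(\sigma)(b-a)^{\sigma}\big)^N}{\Gamma(N\sigma+1)}\,\|u_1-u_2\|_1.\]
By Stirling's approximation the factor $\Gamma(N\sigma+1)$ outgrows the geometric numerator, so the coefficient tends to $0$ as $N\to\infty$ and in particular is strictly less than $1$ for all sufficiently large $N$. Thus some iterate $T^N$ is a contraction on the complete metric space $L^1[a,b]$, and by the generalised Banach fixed-point theorem (Weissinger's theorem, which applies once the contraction coefficients are summable) the map $T$ itself has a unique fixed point, which is the required unique solution $u\in L^1[a,b]$ of \eqref{equiv:Volterra}. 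The main obstacle, and the only place where the Volterra (rather than merely bounded) structure is essential, is this last contraction step: a single application of $T$ need not be a contraction when $C$ or $b-a$ is large, and it is precisely the decay $1/\Gamma(N\sigma+1)$ produced by iterating the power-singular kernel that rescues the argument.
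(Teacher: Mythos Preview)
Your proof is correct but follows a genuinely different route from the paper's. The paper does not iterate $T$; instead it chooses $t_1$ close enough to $a$ that the single operator $T$ is already a contraction on $L^1[a,t_1]$ (using the operator-norm bound from Theorem~\ref{L1:thm}), applies the ordinary Banach fixed-point theorem to get a unique solution on $[a,t_1]$, and then restarts the argument on $[t_1,t_2]$ with the previously-found solution absorbed into the inhomogeneous term, continuing in finitely many equal-length steps until $b$ is reached. Your approach avoids this interval-subdivision bookkeeping by exploiting the Volterra structure more directly: majorising the analytic kernel by a pure Riemann--Liouville power kernel and using the semigroup property collapses the $N$-fold composition into a single convolution with the $1/\Gamma(N\sigma+1)$ decay, so one application of the iterated-contraction (Weissinger) theorem on all of $L^1[a,b]$ suffices. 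The paper's method is slightly more elementary in that it uses only the plain contraction mapping theorem, while yours is globally cleaner and makes explicit why the power-singular kernel, rather than mere boundedness, is what drives existence on the whole interval. Both arguments tacitly need $\mathrm{Re}(\alpha+\gamma)>0$ for the relevant contraction constants to be controllable.
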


\begin{proof}
We first consider the restriction of \eqref{equiv:Volterra} to an interval $[a,t_1]\subset[a,b]$, where $t_1\in(a,b)$ is chosen close enough to $a$ such that
\begin{equation}
\label{t1:defn}
C(t_1-a)^{\alpha}\sup_{|x|<(t_1-a)^{\beta}}|A(x)|<1.
\end{equation}
Defining
\begin{equation}
\label{y0:defn}
u_0(t)\coloneqq\sum_{k=1}^n\frac{C_k(t-a)^{\gamma-k}}{\Gamma(\gamma-k+1)},
\end{equation}
we can rewrite the Volterra equation \eqref{equiv:Volterra} as \[u(t)=Tu(t),\] where the function operator $T$ is defined by
\begin{equation}
\label{T:defn}
Tu(t)=u_0(t)+\prescript{A}{}I_{a+}^{\alpha+\gamma,\beta}f(t,u(t))
\end{equation}
We aim to apply the contraction mapping theorem to the operator $T$ acting on the complete metric space $L^1[a,t_1]$. In order for this theorem to be applicable, the proofs of the following two statements will be required.

\begin{enumerate}
\item If $u\in L^1[a,t_1]$, then $Tu\in L^1[a,t_1]$.
\item For any $u_1,u_2\in L^1[a,t_1]$, we have \[\|Tu_1-Tu_2\|_1\leq r\|u_1-u_2\|_1,\] where $r\in(0,1)$ is constant and $\|\cdot\|_1$ denotes the $L^1$ norm on $L^1[a,t_1]$.
\end{enumerate}

\textbf{Proof of statement 1.} We have assumed the function $f$ is such that $f(t,u(t))$ is an $L^1$ function for any $L^1$ function $u$. Therefore, by Theorem \ref{L1:thm}, the right-hand term in \eqref{T:defn} is also $L^1$. And clearly $u_0$ is an $L^1$ function, so the result follows.

\textbf{Proof of statement 2.} By the definition \eqref{T:defn}, we have \[Tu_1-Tu_2=\prescript{A}{}I_{a+}^{\alpha+\gamma,\beta}f(t,u_1(t))-\prescript{A}{}I_{a+}^{\alpha+\gamma,\beta}f(t,u_2(t)).\] Thus we have the following inequalities for the norm in $L^1[a,t_1]$:
\begin{align*}
\left\|Tu_1-Tu_2\right\|_1&=\left\|\prescript{A}{}I_{a+}^{\alpha+\gamma,\beta}\left(f(t,u_1(t))-f(t,u_2(t))\right)\right\|_1 \\
&\leq\left[(t_1-a)^{\alpha}\sup_{|x|<(t_1-a)^{\beta}}|A(x)|\right]\left\|f(t,u_1(t))-f(t,u_2(t))\right\|_1 \\
&\leq\left[(t_1-a)^{\alpha}\sup_{|x|<(t_1-a)^{\beta}}|A(x)|\right]C\left\|u_1-u_2\right\|_1,
\end{align*}
where in the second line we used the proof of Theorem \ref{L1:thm} above, and in the third line we used the assumed Lipschitz condition on $f$. And the constant
\begin{equation}
\label{r:defn}
r\coloneqq C(t_1-a)^{\alpha}\sup_{|x|<(t_1-a)^{\beta}}|A(x)|
\end{equation}
is strictly between 0 and 1 by assumption, so the result follows.

By the contraction mapping theorem, we can now say that the Volterra equation \eqref{equiv:Volterra} has a unique solution $u^*\in L^1[a,t_1]$ defined on the interval $[a,t_1]$.

Now \eqref{equiv:Volterra} can be rewritten as
\begin{equation}
\label{Volterra2}
u(t)=\sum_{k=1}^n\frac{C_k(t-a)^{\gamma-k}}{\Gamma(\gamma-k+1)}+\int_a^{t_1}(t-\tau)^{\alpha-1}A\left((t-\tau)^{\beta}\right)f(\tau)\,\mathrm{d}\tau+\prescript{A}{}I_{t_1+}^{\alpha+\gamma,\beta}f(t,u(t)),
\end{equation}
or equivalently as \[u(t)=T_1u(t),\] where the function operator $T_1$ is defined by \[T_1u(t)=u_{01}(t)+\prescript{A}{}I_{t_1+}^{\alpha+\gamma,\beta}f(t,u(t))\] and the function $u_{01}$ is defined by \[u_{01}(t)\coloneqq\sum_{k=1}^n\frac{C_k(t-a)^{\gamma-k}}{\Gamma(\gamma-k+1)}+\int_a^{t_1}(t-\tau)^{\alpha-1}A\left((t-\tau)^{\beta}\right)f(\tau)\,\mathrm{d}\tau.\] Note that $u_{01}$ is a fixed function, not depending on $u$, since we have already proved that $u$ is uniquely determined on $[a,t_1]$. Thus we can use exactly the same approach as before to prove that the Volterra equation \eqref{Volterra2} has a unique solution $u^*\in L^1[t_1,t_2]$ defined on the interval $[t_1,t_2]$, where $t_2$ is defined (analogously to \eqref{t1:defn}) by requiring the inequality
\begin{equation}
\label{t2:defn}
C(t_2-t_1)^{\alpha}\sup_{|x|<(t_2-t_1)^{\beta}}|A(x)|<1.
\end{equation}
Note that, by comparison of \eqref{t1:defn} and \eqref{t2:defn}, we can define $t_2-t_1=t_1-a$ provided that this yields a value $t_2$ which is still in the interval $[a,b]$.

This argument can be extended indefinitely: each time we find a unique $L^1$ solution on $[t_{i-1},t_i]$, we can then define $t_{i+1}$ using an inequality analogous to \eqref{t2:defn} and find a unique $L^1$ solution on $[t_i,t_{i+1}]$ using the same argument. Since the difference $t_i-t_{i-1}$ can be taken as constant, the process must eventually end when the end of the interval $[a,b]$ is reached. Putting all of the $L^1[t_{i-1},t_i]$ solutions together yields a piecewise defined function $y$ on $[a,b]$ which is the unique solution in $L^1[a,b]$ of the original problem.
\end{proof}

\begin{corollary}
\label{Cauchy:thm}
Let $a,b,A$ be as in Definition \ref{E:defn}, and fix $\alpha,\beta,\gamma\in\mathbb{C}$ with non-negative real parts. Define $n=\lceil\gamma\rceil$ and let $C_1,\dots,C_n$ be complex constants. Assume the functions $u:[a,b]\rightarrow\mathbb{R}$ and $f:[a,b]\times\mathbb{R}\rightarrow\mathbb{R}$ are such that $u(t)$ and $f(t,u(t))$ are both in $L^1[a,b]$, and that $f$ satisfies the following Lipschitz condition in the second variable:
\[|f(x,y_1)-f(x,y_2)|<C|y_1-y_2|.\]
Then the Cauchy problem defined by \eqref{equiv:Cauchy1} and \eqref{equiv:Cauchy2} has a unique solution $u\in L^1[a,b]$.
\end{corollary}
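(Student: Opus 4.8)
The plan is to combine the two preceding results, since together they already contain all the substantive work. First I would invoke Lemma \ref{equiv}: under the stated hypotheses (that $u(t)$ and $f(t,u(t))$ both lie in $L^1[a,b]$, with $n=\lceil\gamma\rceil$ and prescribed initial data $C_1,\dots,C_n$), solving the Cauchy-type problem \eqref{equiv:Cauchy1}--\eqref{equiv:Cauchy2} is precisely equivalent to solving the Volterra integral equation \eqref{equiv:Volterra}. This reduces the existence-and-uniqueness question for the differintegral equation to the corresponding question for a single integral equation.

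Next I would apply Theorem \ref{Volterra:thm}. The hypotheses of the Corollary include exactly the Lipschitz condition $|f(x,y_1)-f(x,y_2)|<C|y_1-y_2|$ required there, so that theorem guarantees a unique solution of \eqref{equiv:Volterra} in $L^1[a,b]$. Transporting this conclusion back through the equivalence supplied by Lemma \ref{equiv} then yields a unique $L^1[a,b]$ solution of the original Cauchy problem, which completes the argument.

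Since the genuinely hard steps have already been carried out — namely establishing the Cauchy--Volterra equivalence (which rests on Theorem \ref{RL:E:thm} together with the cited result \cite{kilbas-bonilla-trujillo}) and running the contraction-mapping and interval-subdivision argument of Theorem \ref{Volterra:thm} — there is no remaining obstacle of real substance here. The only point deserving care is to verify that the hypotheses line up exactly: the $L^1$ assumptions on $u$ and $f(t,u(t))$ coincide with those of Lemma \ref{equiv}, and the Lipschitz bound coincides with that of Theorem \ref{Volterra:thm}, so both results apply verbatim. The statement is therefore a direct corollary, obtained simply by composing the equivalence with the Volterra existence-and-uniqueness theorem.
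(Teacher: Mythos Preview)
Your proposal is correct and matches the paper's proof essentially verbatim: the paper simply invokes Lemma \ref{equiv} to pass to the Volterra equation \eqref{equiv:Volterra} and then cites Theorem \ref{Volterra:thm} for existence and uniqueness in $L^1[a,b]$. There is nothing to add.
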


\begin{proof}
By Lemma \ref{equiv}, solving the Cauchy problem \eqref{equiv:Cauchy1}--\eqref{equiv:Cauchy2} is equivalent to solving the Volterra integral equation \eqref{equiv:Volterra}. By Theorem \ref{Volterra:thm}, this Volterra equation has a unique solution $u\in L^1[a,b]$.
\end{proof}

\section{Operators with respect to functions}

A well-known extension of the usual calculus is given by differentiating or integrating a function $f(t)$ with respect to another function $g(t)$ instead of with respect to $t$. For integrals, this is called Riemann--Stieltjes integration. The same concept has been extended to fractional derivatives and integrals \cite{osler1,oldham-spanier,kilbas-srivastava-trujillo,samko-kilbas-marichev}, where it is often known as $\psi$-fractional calculus, due to the notation $\psi(t)$ being used instead of $g(t)$. Detailed studies of this idea and its extensions have been made in recent years by authors including \cite{almeida,sousa-oliveira,almeida-malinowska-monteiro}, but the essential definition is as follows for $\psi$-fractional integration and differentiation in the Riemann--Liouville model:
\begin{alignat}{2}
\label{psi:RLint} \prescript{RL}{\psi(t)}I^{\alpha}_{a+}f(t)&\coloneqq\frac{1}{\Gamma(\alpha)}\int_a^t\psi'(\tau)\big(\psi(t)-\psi(\tau)\big)^{\alpha-1}f(\tau)\,\mathrm{d}\tau,\quad\quad&&\mathrm{Re}(\alpha)>0; \\
\label{psi:RLder} \prescript{RL}{\psi(t)}D^{\alpha}_{a+}f(t)&\coloneqq\left(\frac{1}{\psi'(t)}\cdot\frac{\mathrm{d}}{\mathrm{d}t}\right)^m\left(\prescript{RL}{\psi(t)}I^{m-\alpha}_{a+}f(t)\right),\quad\quad&&\mathrm{Re}(\alpha)\geq0,m\coloneqq\lfloor\mathrm{Re}(\alpha)\rfloor+1.
\end{alignat}
We note that \eqref{psi:RLint}, like \eqref{EIdef}, involves multiplying the function $f(t)$ by an expression containing an arbitrary function ($\psi$ versus $A$). But this is the only similarity between the two types of operators: \eqref{EIdef} is a convolution-type operator with very different structure and behaviour from \eqref{psi:RLint}. The definitions \eqref{psi:RLint}--\eqref{psi:RLder} have been extended to $\psi$-fractional differentiation of Caputo \cite{almeida,almeida-malinowska-monteiro} and Hilfer \cite{sousa-oliveira} type, as well as to other models of fractional calculus such as Atangana--Baleanu and Prabhakar \cite{fernandez-baleanu-icfda}. In a similar, natural, way it is possible to extend the definition to our new generalised model of fractional calculus.

\begin{definition}
\label{psi:E:defn}
Let $[a,b]$ be a real interval, and let $\alpha$, $\beta$, $A$ be as in Definition \ref{E:defn}. For any functions $f$ and $\psi$ defined on $[a,b]$ such that $f$ is an $L^1$ function and $\psi$ is both monotonic and $C^1$, we define the generalised fractional integral of $f(t)$ with respect to $\psi(t)$ as follows:
\begin{equation}
\label{psi:Eint}
\prescript{A}{\psi(t)}I^{\alpha,\beta}_{a+}f(t)\coloneqq\int_a^t\psi'(\tau)\left(\psi(t)-\psi(\tau)\right)^{\alpha-1}A\left(\left(\psi(t)-\psi(\tau)\right)^\beta\right)f(\tau)\,\mathrm{d}\tau.
\end{equation}
\end{definition}

Definition \ref{psi:E:defn} provides a natural extension and combination of two different ways of generalising fractional calculus -- namely, differintegration with respect to functions and differintegration using generalised kernel functions. Additionally, this new formalism enables us to consider as special cases several other classical models of fractional calculus, such as the Hadamard and Erdelyi--Kober fractional differintegrals.

\begin{example}
Using $\psi(t)=\log t$ in Definition \ref{psi:E:defn} enables us to recover a generalised version of the \textbf{Hadamard} model of fractional calculus:
\begin{align*}
\prescript{A}{\log(t)}I^{\alpha,\beta}_{a+}f(t)&=\int_a^t\frac{1}{\tau}\left(\log(t)-\log(\tau)\right)^{\alpha-1}A\left(\left(\log(t)-\log(\tau)\right)^\beta\right)f(\tau)\,\mathrm{d}\tau \\
&=\int_a^t\frac{1}{\tau}\left(\log\left(\tfrac{t}{\tau}\right)\right)^{\alpha-1}A\left(\left(\log\left(\tfrac{t}{\tau}\right)\right)^\beta\right)f(\tau)\,\mathrm{d}\tau.
\end{align*}
If we now set $A(x)=\frac{1}{\Gamma(\alpha)}$ and $\beta=0$ as in \eqref{ERL:fns}, then we recover from this the standard Hadamard fractional integral:
\[\prescript{H}{}I^{\alpha}_{a+}f(t)=\frac{1}{\Gamma(\alpha)}\int_a^t\frac{1}{\tau}\left(\log\left(\tfrac{t}{\tau}\right)\right)^{\alpha-1}f(\tau)\,\mathrm{d}\tau.\]
\end{example}


\begin{example}
Using $\psi(t)=t^{\rho+1}$ in Definition \ref{psi:E:defn} enables us to recover a generalised version of the \textbf{Katugampola} model of fractional calculus:
\begin{align*}
\prescript{A}{t^{\rho+1}}I^{\alpha,\beta}_{a+}f(t)&=\int_a^t(\rho+1)\tau^{\rho}\left(t^{\rho+1}-\tau^{\rho+1}\right)^{\alpha-1}A\left(\left(t^{\rho+1}-\tau^{\rho+1}\right)^\beta\right)f(\tau)\,\mathrm{d}\tau \\
&=(\rho+1)\int_a^t\left(t^{\rho+1}-\tau^{\rho+1}\right)^{\alpha-1}A\left(\left(t^{\rho+1}-\tau^{\rho+1}\right)^\beta\right)\tau^{\rho}f(\tau)\,\mathrm{d}\tau.
\end{align*}
If we now set $A(x)=\frac{(\rho+1)^{-\alpha}}{\Gamma(\alpha)}$ and $\beta=0$ as in \eqref{ERL:fns}, then we recover from this the standard Katugampola fractional integral introduced in \cite{katugampola}:
\[\prescript{K}{}I^{\alpha}_{a+}f(t)=\frac{(\rho+1)^{1-\alpha}}{\Gamma(\alpha)}\int_a^t\left(t^{\rho+1}-\tau^{\rho+1}\right)^{\alpha-1}\tau^{\rho}f(\tau)\,\mathrm{d}\tau.\]
\end{example}

\begin{example}
Using $\psi(t)=t^{\sigma}$ and replacing $f(t)$ by $t^{\sigma\eta}f(t)$ in Definition \ref{psi:E:defn} enables us to recover one possible generalisation of the \textbf{Erdelyi--Kober} model of fractional calculus:
\begin{align*}
\prescript{A}{t^{\sigma}}I^{\alpha,\beta}_{a+}\left(t^{\sigma\eta}f(t)\right)&=\int_a^t\sigma\tau^{\sigma-1}\left(t^{\sigma}-\tau^{\sigma}\right)^{\alpha-1}A\left(\left(t^{\sigma}-\tau^{\sigma}\right)^\beta\right)\tau^{\sigma\eta}f(\tau)\,\mathrm{d}\tau \\
&=\sigma\int_a^t\left(t^{\sigma}-\tau^{\sigma}\right)^{\alpha-1}A\left(\left(t^{\sigma}-\tau^{\sigma}\right)^\beta\right)\tau^{\sigma\eta+\sigma-1}f(\tau)\,\mathrm{d}\tau.
\end{align*}
If we now set $A(x)=\frac{1}{\Gamma(\alpha)}$ and $\beta=0$ as in \eqref{ERL:fns}, and also multiply by $t^{-\sigma(\alpha+\eta)}$, then we recover the Erdelyi--Kober fractional integral as defined in \cite[\S18]{samko-kilbas-marichev}:
\[t^{-\sigma(\alpha+\eta)}\left[\prescript{\frac{1}{\Gamma(\alpha)}}{t^{\sigma}}I^{\alpha,\beta}_{a+}\left(t^{\sigma\eta}f(t)\right)\right]=\frac{\sigma t^{-\sigma(\alpha+\eta)}}{\Gamma(\alpha)}\int_a^t\left(t^{\sigma}-\tau^{\sigma}\right)^{\alpha-1}\tau^{\sigma\eta+\sigma-1}f(\tau)\,\mathrm{d}\tau.\]
\end{example}

The above examples demonstrate that our model with general analytic kernels can be extended to cover even more of the classical models of fractional calculus: not only the Riemann--Liouville model and related formulae with different kernels, but also the Hadamard and Katugampola models and their generalisations.

Although this is not the main concern of the current paper, many of the results we have already proved above for the generalised operator $\prescript{A}{}I_{a+}^{\alpha,\beta}f(t)$ with respect to the variable $t$ can also be proved in a very similar way for the operator $\prescript{A}{\psi(t)}I_{a+}^{\alpha,\beta}f(t)$ with respect to a function $\psi(t)$. For example, we have the following generalised Leibniz rule as an extension of Theorem \ref{Leibniz:thm}.

\begin{theorem}
\label{psi:Leibniz:thm}
If $f\in C[a,b]$ and $g\in C^{\infty}[a,b]$, and if $\psi\in C^1[a,b]$ is monotonic, then for any $\alpha,\beta\in\mathbb{C}$ with non-negative real parts, we have:
\begin{equation}
\label{psi:Leibniz:eqn}
\prescript{A}{\psi(t)}I^{\alpha,\beta}_{a+}\big(f(t)g(t)\big)=\sum_{m=0}^{\infty}\left(\frac{1}{\psi'(t)}\cdot\frac{\mathrm{d}}{\mathrm{d}t}\right)^m\big(g(t)\big)\sum_{n=0}^{\infty}a_n\Gamma(\beta n+\alpha)\binom{-\alpha-n\beta}{m}\prescript{RL}{\psi(t)}I^{\alpha+n\beta+m}_{a+}f(t).
\end{equation}
\end{theorem}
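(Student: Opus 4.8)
The plan is to reduce the statement to the already-proven Theorem~\ref{Leibniz:thm} by the change of variables $x=\psi(t)$, which conjugates the $\psi$-operators into ordinary operators. First I would record the $\psi$-analogue of the series formula of Theorem~\ref{Eseries:thm}: substituting the power series \eqref{A:entire} for $A$ into the definition \eqref{psi:Eint} and interchanging summation with integration gives
\[
\prescript{A}{\psi(t)}I^{\alpha,\beta}_{a+}f(t)=\sum_{n=0}^{\infty}a_n\Gamma(\beta n+\alpha)\,\prescript{RL}{\psi(t)}I^{\alpha+n\beta}_{a+}f(t),
\]
the interchange being justified by local uniform convergence exactly as in the proof of Theorem~\ref{Eseries:thm}, the argument of $A$ now lying in the disc of radius $(\psi(b)-\psi(a))^{\mathrm{Re}(\beta)}$ rather than $(b-a)^{\mathrm{Re}(\beta)}$, which is assumed smaller than $R$.

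The central observation is that the substitution $s=\psi(\tau)$ in \eqref{psi:Eint} yields the conjugation identity $\prescript{A}{\psi(t)}I^{\alpha,\beta}_{a+}h(t)=\big(\prescript{A}{}I^{\alpha,\beta}_{\psi(a)+}H\big)(\psi(t))$, where $H\coloneqq h\circ\psi^{-1}$, and likewise $\prescript{RL}{\psi(t)}I^{\nu}_{a+}f(t)=\big(\prescript{RL}{}I^{\nu}_{\psi(a)+}F\big)(\psi(t))$ with $F\coloneqq f\circ\psi^{-1}$. Applying this with $h=fg$ and setting $G\coloneqq g\circ\psi^{-1}$, I would invoke Theorem~\ref{Leibniz:thm} in the variable $x=\psi(t)$ to expand $\prescript{A}{}I^{\alpha,\beta}_{\psi(a)+}(FG)(x)$, rewriting each $\prescript{RL}{}D^{-\alpha-n\beta-m}_{\psi(a)+}F$ as the integral $\prescript{RL}{}I^{\alpha+n\beta+m}_{\psi(a)+}F$.

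To return to the $t$-variable I would use the induction identity $G^{(m)}(\psi(t))=\big(\tfrac{1}{\psi'(t)}\tfrac{\mathrm{d}}{\mathrm{d}t}\big)^m g(t)$, which follows by repeatedly applying the chain rule $\tfrac{\mathrm{d}}{\mathrm{d}t}\big[G(\psi(t))\big]=\psi'(t)\,G'(\psi(t))$, together with the conjugation identity read backwards for the fractional integrals of $F$. Substituting these two conversions into the expansion reproduces \eqref{psi:Leibniz:eqn} term by term; no re-derivation of a $\psi$-version of the Podlubny identity \eqref{Leibniz:RL} is needed, since that work is already absorbed into Theorem~\ref{Leibniz:thm}.

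The main obstacle is regularity, not algebra. To apply Theorem~\ref{Leibniz:thm} in the $x$-variable I need $F=f\circ\psi^{-1}$ continuous and $G=g\circ\psi^{-1}$ in $C^{\infty}$; by the inverse function theorem $\psi^{-1}$ is as smooth as $\psi$ provided $\psi'$ does not vanish, so the composites inherit the required regularity. Indeed the right-hand side of \eqref{psi:Leibniz:eqn} already presumes that $\big(\tfrac{1}{\psi'}\tfrac{\mathrm{d}}{\mathrm{d}t}\big)^m$ is defined for every $m$, which effectively forces $\psi$ to be smooth with non-vanishing derivative; under this (implicit) strengthening of the stated $C^{1}$ hypothesis one has $G\in C^{\infty}$, and the convergence of the double series — including the vanishing of the remainder carried over from \eqref{Leibniz:RL} — is inherited verbatim from Theorem~\ref{Leibniz:thm}, so no fresh estimate is required.
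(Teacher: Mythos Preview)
Your argument is correct, but it takes a genuinely different route from the paper. The paper's proof consists of one sentence: it re-runs the proof of Theorem~\ref{Leibniz:thm} verbatim with every ordinary derivative and Riemann--Liouville integral replaced by its $\psi$-analogue, relying implicitly on a $\psi$-version of the Podlubny identity~\eqref{Leibniz:RL} and its remainder estimate. You instead conjugate by the substitution $x=\psi(t)$, which turns the $\psi$-operators into standard ones on $[\psi(a),\psi(b)]$, apply Theorem~\ref{Leibniz:thm} once to $F\cdot G$ in that variable, and then translate back via $G^{(m)}(\psi(t))=\big(\tfrac{1}{\psi'(t)}\tfrac{\mathrm{d}}{\mathrm{d}t}\big)^m g(t)$ and the corresponding identity for the integrals. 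Your approach is cleaner in that it recycles the existing theorem wholesale and avoids invoking or re-deriving a $\psi$-Leibniz rule for Riemann--Liouville differintegrals; the paper's approach is shorter to state but leaves more to the reader. You are also right to flag that the stated hypothesis $\psi\in C^{1}$ is not enough for either argument --- the right-hand side of~\eqref{psi:Leibniz:eqn} already requires $\big(\tfrac{1}{\psi'}\tfrac{\mathrm{d}}{\mathrm{d}t}\big)^m$ to make sense for every $m$, so smoothness of $\psi$ with nonvanishing derivative is implicitly assumed; this is a gap in the paper's statement rather than in your proof.
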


\begin{proof}
The proof is exactly analagous to that of Theorem \ref{Leibniz:thm}, with all derivatives and integrals replaced by their $\psi$-differintegral equivalents.
\end{proof}

A fruitful future direction of research might be to examine how much of the usual fractional calculus can be extended to the operators with general analytic kernels. Here we have indicated one direction of generalisation -- combining the ideas of differintegration with respect to functions and the new class of kernel functions -- but other directions may also be possible in the future.

\section{Conclusions} \label{sec:conclusions}

In this work, we have introduced a new general framework for fractional calculus, which incorporates many existing definitions of fractional integrals and derivatives as special cases. We started by defining an integral operator with a general analytic kernel, and proved that it can be written as an infinite series of Riemann--Liouville integrals. Series appear naturally in fractional calculus, and this result is an indication of the fundamental role of Riemann--Liouville in fractional calculus. After proving some fundamental properties of our generalised operator, we considered how it might be used to define fractional derivatives as well as integrals. We demonstrated how the Leibniz rule and chain rule can be extended to the new operators. Using the method of Fourier and Laplace transforms, we analysed and solved some simple ordinary differential equations in the new general framework. Using the contraction mapping theorem and a Volterra integral equation, we also proved existence and uniqueness for Cauchy problems in a more general class of differential equations. Finally, we indicated a new direction of research for the future by proposing a definition of differintegration with respect to functions in the generalised model.

We believe that this new model can now be used as a way of proving various useful results in a more general context. For example, theorems such as the fractional product rule, chain rule, and Taylor's theorem have been proved in some recently suggested models using series \cite{baleanu-fernandez,fernandez-baleanu-srivastava,fernandez-baleanu2}, and similar methods may now be applicable to a much broader class of fractional operators. In the future, theorems like these and many others could potentially be proved in the generalised fractional calculus which we have introduced here. The advantage of such general results is that they advance the field as a whole, not just in one model or another of fractional calculus but increasing knowledge about many different models simultaneously. Generalisation is one of the most powerful tools in the mathematician's arsenal, and by generalising many fractional models in a single framework, we have opened the possibility for a unified approach to solve many problems in the future.

\section*{Acknowledgements}

The authors would like to thank the editor-in-chief, the editor, and the anonymous reviewers for their useful feedback and suggestions.

\section*{Declaration of interests}

The authors declare that they have no competing interests.

\end{document}